\newcommand{\Sp}{\ensuremath{\textup{Sp}}}
\newcommand{\proSp}{\ensuremath{\textup{pro-Sp}}}
\newcommand{\op}{{\ensuremath{\textup{op}}}}
\DeclareMathOperator{\Hor}{\ensuremath{\textup{Hor}}}
\newcommand{\dgrm}[1]{\ensuremath{\smash{\underset{\widetilde{\hphantom{#1}}}{#1}} \mathstrut}}
\newcommand {\cofib} {\ensuremath{\hookrightarrow}}
\newcommand {\fibr} {\ensuremath{\twoheadrightarrow}}
\newcommand {\trivcofib} {\ensuremath{\tilde\hookrightarrow}}
\newcommand {\trivfibr} {\ensuremath{\tilde\twoheadrightarrow}}
\newcommand {\we} {\ensuremath{\tilde\rightarrow}}
\DeclareMathOperator{\hocolim}{\textup{hocolim}}
\newcommand{\cal}[1]{\ensuremath{\mathcal #1}} 
\newtheorem {theorem1}{Theorem}[section]
\newtheorem {theorem}[theorem1]{Theorem}
\newtheorem {corollary}[theorem1]{Corollary}
\newtheorem {proposition}[theorem1]{Proposition}
\newtheorem {lemma}[theorem1]{Lemma}
\theoremstyle{definition}
\newtheorem {definition}[theorem1]{Definition}
\theoremstyle{remark}
\newtheorem {remark}[theorem1]{Remark}
\newcommand{\cat}[1]{\ensuremath{\EuScript #1}}
\newcommand{\colim}{\ensuremath{\mathop{\textup{colim}}}}
\renewcommand{\hom}{\ensuremath{{\rm hom}}}
\newcounter{zahl}%
    {\end{list}}%
\begin{document}

\baselineskip=17pt



\title[A classification of small homotopy functors]{A classification of small homotopy functors from spectra to spectra}

\author[B.~Chorny]{Boris Chorny}

\address{Department of Mathematics, Physics and Computer Science\\ University of Haifa at Oranim\\
Tivon 36006, Israel}

\email{chorny@math.haifa.ac.il}

\date{}

\begin{abstract}
We show that every small homotopy functor from spectra to spectra is weakly equivalent to a filtered colimit of
representable functors represented in cofibrant spectra. Moreover, we present this classification as a
Quillen equivalence of the category of small functors from spectra to spectra equipped with the homotopy
model structure and the opposite of the pro-category of spectra with the strict model structure.
\end{abstract}

\subjclass[2010]{Primary 18G55; Secondary 55P42}

\keywords{model categories, homotopy functors, pro-spectra}

\maketitle

\section{Introduction}
Let $\Sp$ denote the closed symmetric monoidal model category of spectra, which is also combinatorial.
Either symmetric spectra, \cite{Hovey-Shipley-Smith}, or Lydakis' linear functors from finite pointed simplicial sets to
simplicial sets, \cite{Lydakis}, may serve as a model.

In this work, we suggest a classification of small homotopy
functors from spectra to spectra. Namely, we show that, up to a
weak equivalence, every small homotopy functor is a filtered
colimit of representable functors represented in cofibrant
spectra.

Our interest in this question stems from classification problems
related to Goodwillie's calculus of homotopy functors. Finitary
linear (more generally, homogeneous) functors from spaces to
spaces or spectra were classified by T.~Goodwillie,
\cite{Goo:calc3}. Finitary polynomial functors were classified by
W.G.~Dwyer and C.~Rezk (unpublished) and, independently, by
G.~Arone and M.~Ching, \cite{Arone-Ching-classification}. Small
functors are rather like finitary functors, except that they
commute with filtered colimits starting from a certain non-fixed
cardinality instead of commuting with all filtered colimits as
finitary functors do. It is a natural question whether these
classifications extend to more general small functors.

In this work, we present a classification of small linear functors
from spectra to spectra. Since homotopy pushouts are also homotopy
pullbacks in $\Sp$, every representable functor is linear (if it
is represented by a cofibrant spectrum and we look at its values
only in fibrant spectra), and so are filtered colimits of
representable functors. The purpose of this work is to show that
these are all small linear functors.

It turns out that a small homotopy functor is linear, since small
functors are continuous with respect to the spectral enrichment.
This fact is a topological counterpart of a well-known algebraic
phenomenon: any additive functor preserving quasi-isomorphisms of
chain complexes gives rise to a triangulated functor of derived
categories, the total derived functor. Even though its proof seems
to be missing from the literature, it is well-known to the
experts. We are grateful to Michael Ching who brought this fact to
our attention.

Our result may be viewed as a higher version of a well known
statement about homology functors defined on the homotopy category
of spectra: every homology functor is a filtered colimit of
representables, \cite[4.19]{Hovey-Strickland}. From this point of
view, the current work continues to transfer the representability
theorems into the enriched realm, which was initiated in
\cite{Duality}, \cite{Chorny-Brownrep},
\cite{Jardine-representability}.

Of course, the most convenient way to formulate our classification
result is to exhibit it as a Quillen equivalence of certain model
categories. Indeed, we define a new model structure on the category of small functors from spectra to spectra, localizing the fibrant-projective model structure, \cite{Duality}.  In the new model structure the fibrant objects are the homotopy functors, therefore we call it the homotopy model structure. Next, we construct a Quillen pair
\[
\xymatrix{
O\colon \Sp^{\Sp} \ar@/^/[r] &  (\proSp)^{\op}: \! P, \ar@/^/[l]
}
\]
where the right adjoint $P$ is the restriction of Yoneda
embedding, sending every pro-space into a filtered colimit of
representable functors. The classification of homotopy functors
may be performed without using much of the model categories
technique, and therefore we postpone the proof that this Quillen
adjunction is a Quillen equivalence to the end of the paper.

It is interesting to compare our classification with Goodwillie's
classification of finitary linear functors, according to which,
every finitary linear functor is equivalent to $-\wedge E$ for
some spectrum $E$, so that the homotopy category of finitary
linear functors is equivalent to the homotopy category of spectra.
See \cite{BCR} for the model categorical formulation of this
classification. However, every spectrum is a filtered colimit of
compact spectra, say $E=\colim_i E_i$. Hence, $-\wedge E =
\colim -\wedge E_i$ with $E_i$ compact for every $i$. A version of
Spanier-Whitehead duality, \cite[7.1]{Duality}, ensures that $-
\wedge E_i = R^{DE_i}(-)$, and hence $-\wedge E = \colim_i
R^{DE_i}$, which fits our description. The embedding of finitary
functors into all small functors corresponds to the embedding of
spectra into the opposite category of pro-spectra as filtered
colimits of compact spectra, whose category is self-dual.

However, not every linear functor is small. For example, consider the functor $F(-)=\hom(\hom(-,A),B)$ for two fibrant spectra $A$ and $B$. It is not equivalent to a filtered colimit of representable functors, since it is not small (does not commute with filtered colimits of any size).

Another example, suggested by the anonymous referee, of a non-small functor is following: $F(Y)= \tilde Y\wedge \tilde Y$. The reason in this case is different: this functor is not continuous with respect to the spectral enrichment, i.e. not a \emph{spectral} functor. Suppose for contradiction that $F$ is a small spectral functor. Since $F$ is a homotopy functor, it may be approximated by a $C$-cellular functor $\tilde F \we F$ in the fibrant-projective model structure, \cite[5.8]{Duality}, where 
\[
C=\left\{
R^X\wedge K\cofib R^X\wedge L \left|
	\begin{array}{l}
		X \text{-- fibant and cofibrant spectrum;}\\
		K\cofib L \text{ generating cofibration in \Sp.}
	\end{array}
\right.
\right\}
\] 

Enriched version of the Spanier-Whitehead duality, \cite[Lemma~7.2]{Duality}, implies, by induction on the skeleton of $\tilde F$, that for any compact cofibrant spectrum $A$, $F(A\wedge Y)\simeq A\wedge F(Y)$. On the other hand, $F(A\wedge Y) = (A\wedge Y)_{\text{cof}} \wedge (A\wedge Y)_{\text{cof}} \simeq (\tilde A \wedge \tilde A) \wedge (\tilde Y\wedge \tilde Y)$, leading to a contradiction.

This example demonstrates  a failed attempt to define a quadratic functor, which is not linear. Indeed, as we mentioned before, we will show in Proposition~\ref{homotopy-linear} that all small homotopy functors are linear, so there is no calculus of functors for the spectral functors, just linear algebra.

The situation is entirely different for simplicial functors from spectra to spectra, \cite{Ching}. The classification of simplicial linear functors demands the development of additional technical tools and will appear in a separate paper, \cite{Chorny-ClassLinFun}.

\subsection*{The paper is organized as follows} Section~\ref{preliminaries} is devoted to
the construction of a left adjoint to $P$, which embeds the
opposite of the category of pro-spectra as a full subcategory of
pro-representable functors in $\Sp^\Sp$. Beginning with the
fibrant-projective model structure, \cite{Duality}, on the
category of small functors, we show that this adjunction is a
Quillen pair if pro-spectra are equipped with the strict model
structure, \cite{Isaksen-strict}. 

In Section~\ref{homotopy-model}, we localize the fibrant-projective model structure on the category
of small functors with respect to a proper class of maps, ensuring
that the local objects are precisely the fibrant homotopy
functors. Therefore, we name it the homotopy model structure.

Sections~\ref{section-homotopy-linear} and
\ref{section-classification} are the technical heart of the paper,
where the classification of small homotopy and linear functors is
performed, except for the model categorical reformulation.
Section~\ref{section-homotopy-linear} contains the proof that
every homotopy functor is linear, filling the gap in the
literature. Section~\ref{section-homotopy-linear} is devoted to
the proof that every linear functor is weakly equivalent to a
filtered colimit of representables in the fibrant-projective model
structure. These sections rely on a minimal model categorical
technique and hopefully may be read by people not interested in
model categories. 

It is not immediate to show that the constructed Quillen pair is a Quillen equivalence again. In order to do so, we give an alternative localization construction in Section~\ref{section-localization-construction}, which is
expressed in terms of the adjoint functors with which we are
working, and which may be described as a derived unit of this
adjunction. Using our classification, we show that this adjunction
coincides, up to homotopy, with the adjunction constructed in
Section~\ref{homotopy-model}.

Finally, in Section~\ref{section-Quillen-equivalence}, we prove our main
result, that our homotopy model structure on the category of small
functors is Quillen-equivalent to the opposite of the category of
pro-spectra. It is also formulated as Theorem~\ref{main-theorem}.

We would like to conclude this introduction with a notice that, unlike in \cite{PhDI,PhDII},
the localization with respect to a proper class of maps appearing
in this paper is not functorial. We do not know whether it is
possible to find a localization functor inverting the same class
of maps, but we have developed, with Georg Biedermann, an extension
to Bousfield-Friedlander localization machinery, which is suitable
for work with some non-functorial localization constructions,
\cite[Appendix A]{Duality}. We apply this machinery to the
localization construction from Section~\ref{homotopy-model},
whereas we were not able to apply it to the localization construction from Section~\ref{section-localization-construction}.

\subsection*{Acknowledgments} We thank Michael Ching and Bill Dwyer for helpful discussions.

\section{Preliminaries on pro-spaces}\label{preliminaries}
The goal of this preliminary section is to show that the opposite
of the category of pro-spectra is equivalent to a reflective
subcategory of small functors from spectra to spectra. If we
choose to work with the fibrant-projective model structure on the
category of small functors, this adjunction carries over to the
level of homotopy categories.

The objects of the category of pro-spectra are cofiltered diagrams
of spectra, i.e., for every filtering $I$, any functor $X\colon
I^{\op}\to \Sp$ is a pro-spectrum. We denote this pro-object as
$X_{\bullet} = \{X_i\}_{i\in I}$.

The morphisms between two pro-objects $\{X_i\}_{i\in I}$ and
$\{Y_j\}_{j\in J}$ are ladders that commute after a composition
with the bonding maps. Formally,
\[
\hom_{\proSp}(\{X_i\}, \{Y_j\}) = \lim_{j\in J}\colim_{i\in I}\hom_{\Sp}(X_i, Y_j).
\]
The category of pro-spectra is enriched over the category of spectra with the internal $\hom_{\proSp}( -, -)$
calculated by the above rule, while taking $\hom_{\Sp}( -, -)$ to be the internal $\hom$-functor in the close
symmetric monoidal category of spectra.

The category of small functors from spectra to spectra $\Sp^{\Sp}$ consists of small functors
as objects and natural transformations as morphisms. We remind the reader that a functor $F\colon \Sp\to \Sp$ is small
if it is a left Kan extension of its restriction to some small subcategory; equivalently, small functors are small
weighted colimits of representable functors.

The restriction of the Yoneda embedding on the category of spectra
is a functor $P\colon (\proSp)^{\op} \to \Sp^{\Sp}$ that sends
every pro-spectrum $X_{\bullet}$ into the pro-representable
functor $\hom_{\proSp}(X_{\bullet}, - )\colon \Sp \to \Sp$. By the
definition of morphisms in the category of pro-spectra, the
pro-representable functor $\hom_{\proSp}(\{X_i\}, - )=\colim_{i\in
I}\hom_{\Sp}(X_i,-)$ is a filtered colimit of representable
functors $R^{X_{i}}$ over $I$. In particular, every
pro-representable functor is small.

Now, we show that the functor $Y$ has a left adjoint. The argument
we give below works for every locally presentable, closed symmetric monoidal category and not just spectra.

\begin{proposition}\label{adjunction}
The functor $P\colon (\proSp)^{\op} \to \Sp^{\Sp}$ has a left adjoint $O\colon \Sp^{\Sp}\to (\proSp)^{\op}$.
\end{proposition}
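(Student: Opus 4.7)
The plan is to read off $O$ from the right adjoint via Yoneda and extend by colimits. For a representable $R^A=\hom_\Sp(A,-)$, Yoneda gives
\[
\Sp^\Sp\bigl(R^A,\,P(X_\bullet)\bigr)\;\cong\;P(X_\bullet)(A)\;=\;\hom_\proSp\bigl(X_\bullet,\, c(A)\bigr),
\]
where $c(A)$ denotes the constant pro-spectrum at $A$. This forces the definition $O(R^A):=c(A)$; equivalently, $O$ restricted to the full subcategory of representables is (the opposite of) the Yoneda embedding $\Sp\hookrightarrow \proSp$.

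Next, by definition a small functor $F$ is a left Kan extension of its restriction to some small subcategory, hence a small (weighted) colimit of representables; write $F\cong\colim_{d\in D} R^{A_d}$ for some small indexing $D$. Since a left adjoint must preserve colimits, one is forced to set
\[
O(F)\;:=\;\invlim_{d\in D}^{\proSp}\, c(A_d),
\]
the limit in $\proSp$ of the resulting diagram of constant pro-spectra; existence is guaranteed by completeness of $\proSp$. The required natural bijection is then a direct calculation: for every $X_\bullet\in\proSp$,
\begin{align*}
\Sp^\Sp\bigl(F,\, P(X_\bullet)\bigr)
 &\;\cong\; \invlim_{d\in D}\,\Sp^\Sp\bigl(R^{A_d},\, P(X_\bullet)\bigr) \\
 &\;\cong\; \invlim_{d\in D}\,\hom_\proSp\bigl(X_\bullet,\, c(A_d)\bigr) \\
 &\;\cong\; \hom_\proSp\bigl(X_\bullet,\, \invlim_{d\in D} c(A_d)\bigr) \\
 &\;=\; (\proSp)^{\op}\bigl(O(F),\, X_\bullet\bigr),
\end{align*}
using, on successive lines, that $\Sp^\Sp(-,G)$ turns colimits into limits, the Yoneda identification above, and the universal property of limits in $\proSp$. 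The enriched version of the adjunction is obtained by rerunning the same argument with spectrum-valued mapping objects, which works verbatim because $\hom_\proSp$ is defined as a filtered colimit/limit of the spectral $\hom_\Sp$.

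The one real subtlety is well-definedness, since a small functor $F$ admits many presentations as a colimit of representables, and the formula for $O(F)$ is given in terms of such a choice. The cleanest remedy is to exhibit a canonical presentation: $F$ is tautologically the colimit of representables indexed by its (enriched) category of elements, and smallness of $F$ is precisely the assertion that this indexing category admits an essentially small cofinal subcategory. Independence of the chosen small presentation then follows from the universal property of the pro-limit visible in the calculation above---any two small presentations of $F$ produce naturally isomorphic diagrams of mapping objects $\hom_\proSp(X_\bullet,\, c(A_d))$, forcing their pro-limits to be canonically isomorphic---while naturality of the adjunction bijection in both variables is automatic from the naturality of each step in the display.
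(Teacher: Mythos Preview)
Your argument is correct and is in fact the conceptually cleanest route: identify $O$ on representables via Yoneda, then extend by colimits using completeness of $\proSp$ (which holds since $\Sp$ has finite limits), and recover well-definedness and functoriality from the fact that your computation exhibits $OF$ as a representing object for $X_\bullet\mapsto \Sp^\Sp(F,P(X_\bullet))$.

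The paper proceeds differently. Rather than forming an arbitrary small colimit of representables and taking the corresponding limit in $\proSp$, it invokes the class-finite presentability of $\Sp^\Sp$ to write $F=\colim_{i\in I}C_i$ as a \emph{filtered} colimit of \emph{compact} objects $C_i$ (finite weighted colimits of representables). Compactness lets one commute $\hom(C_i,-)$ past the filtered colimit $\colim_j R^{X_j}=PX_\bullet$, and an auxiliary adjunction $(Y,Z)$ between $\Sp^{\op}$ and $\Sp^\Sp$ then rewrites the result as a pro-hom. This produces a solution set, and Freyd's adjoint functor theorem finishes the job, together with the explicit formula $OF\cong\{ZC_i\}_{i\in I}$.

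The trade-off is this: your approach is more elementary and avoids both the class-finite presentability input and the $(Y,Z)$ adjunction, but the value $OF=\invlim_d c(A_d)$ is only given as an abstract limit in $\proSp$. The paper's approach is heavier but delivers $OF$ directly as a pro-object indexed by a filtered category, with levels $ZC_i$ computed in $\Sp$; this concrete description is what the later sections (e.g.\ the proof of Lemma~\ref{filtered-colimit} and the localization construction in Section~\ref{section-localization-construction}) actually use. If you adopt your proof, you would need to supplement it with the observation that any small limit in $\proSp$ can be rewritten as a cofiltered system of finite limits in $\Sp$, recovering an explicit indexing.
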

\begin{proof}
We shall use the adjunction constructed in \cite{Duality}
\[\xymatrix{
Y\colon \Sp^{\op} \ar@/^/[r] & \Sp^{\Sp}: \! Z, \ar@/^/[l]
}\]
and the fact that the category of small functors from spectra to spectra is class-finitely
presentable \cite{Chorny-Rosicky}.

Every small functor is a filtered colimit of finite weighted
colimits of representable functors. Let $\Sp^{\Sp}\ni F =
\colim_{i\in I} C_i$, where $C_i=A_{k}\star_{k\in K} R^{B_k}$ with
all $A_k$ finite spectra. Then,
\begin{multline*}
\hom_{\Sp^{\Sp}}(F, PX_{\bullet}) = \hom_{\Sp^{\Sp}}(\colim_{i\in I} C_i, \colim_{j\in J}R^{X_j}) = \lim_{i\in I}\hom_{\Sp^{\Sp}}(C_i, \colim_{j\in J}R^{X_j}) = \\
\lim_{i\in I}\colim_{j\in J}\hom_{\Sp^{\Sp}}(C_i, R^{X_j}) = \lim_{i\in I}\colim_{j\in J}\hom_{\Sp^{\op}}(ZC_i, X_j) = \\
\lim_{i\in I}\colim_{j\in J}\hom_{\Sp}(X_j, ZC_i) = \hom_{\proSp}(\{X_j\},\{ZC_i\}) = \hom_{(\proSp)^{\op}}(\{ZC_i\}, \{X_j\}).
\end{multline*}

Of course, the representation of $F$ as a filtered colimit of compact objects is not unique, but if we can take any representation of the kind $F=\colim_{i\in I}C_i$, then the map $f\colon
F\to\colim_{i\in I}R^{ZC_i} = P\{ZC_i\}$ serves as a solution set, since, according to the computation above, every map $F\to PX_{\bullet}$ factors through $f$. Freyd's adjoint functor theorem implies the existence of the left adjoint for $P$, and we can compute its value, up to an isomorphism, by choosing a representation for $F$ as a filtered colimit of compact objects and declaring $OF = \{ZC_i\}_{i\in I}$.
\end{proof}

The category of small functors from spectra to spectra carries the
fibrant-projective model structure constructed in \cite{Duality}.
Fibrant-projective weak equivalences and fibrations are the
natural transformations of functors inducing levelwise weak
equivalences or fibrations between their values in fibrant
objects. We conclude the categorical preliminaries by the
following proposition that states, essentially, that the opposite
of the homotopy category of pro-spectra is a co-reflective
subcategory of the homotopy category of small functors.

\begin{proposition}\label{Quillen-map}
The pair of adjoint functors
\[
\xymatrix{
P\colon (\proSp)^{\op} \ar@/^/[r] & \Sp^{\Sp} : \! O, \ar@/^/[l]
}
\]
constructed in Proposition~\ref{adjunction} is a Quillen pair if we equip the category of small functors
with the fibrant-projective model structure and the category pro-spectra with the strict model structure.
\end{proposition}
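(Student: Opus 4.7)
The plan is to verify that the right adjoint $P$ takes fibrations (resp.\ trivial fibrations) in $(\proSp)^{\op}$ to fibrations (resp.\ trivial fibrations) in the fibrant-projective model structure on $\Sp^{\Sp}$; this is equivalent to the Quillen condition. A fibration (trivial fibration) in $(\proSp)^{\op}$ is by definition a cofibration (trivial cofibration) in Isaksen's strict model structure on $\proSp$, and by \cite{Isaksen-strict} any such map is isomorphic in $\proSp$ to a morphism $\phi\colon B_\bullet \to A_\bullet$ of pro-spectra indexed by a common filtering category $I$ whose components $B_i \to A_i$ are cofibrations (trivial cofibrations) in $\Sp$. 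Because $P$ is a genuine functor on the pro-category, it suffices to check the Quillen condition for such levelwise $\phi$.

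Evaluating $P\phi$ at a fibrant spectrum $Y$ yields the map
\[
\colim_{i \in I} \hom_\Sp(A_i, Y) \longrightarrow \colim_{i \in I} \hom_\Sp(B_i, Y).
\]
For each $i$, the SM7 axiom for the closed symmetric monoidal model structure on $\Sp$ provides a (trivial) fibration $\hom_\Sp(A_i, Y) \to \hom_\Sp(B_i, Y)$, since $B_i \to A_i$ is a (trivial) cofibration and $Y$ is fibrant. Filtered colimits in $\Sp$ preserve (trivial) fibrations: the generating (trivial) cofibrations of the chosen model for spectra have compact domain and codomain, so any lifting problem against a filtered colimit factors through a single stage, where it can be solved. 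Consequently $P\phi(Y)$ is a (trivial) fibration of spectra for every fibrant $Y$, which is precisely the fibrant-projective (trivial) fibration condition.

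The delicate point I expect to have to justify carefully is the reduction to the levelwise case, since (trivial) cofibrations of the strict model structure are only defined up to pro-isomorphism and reindexing. However, this causes no real trouble because the formula $P X_\bullet = \colim_{i\in I}\hom_\Sp(X_i,-)$ is manifestly invariant under pro-isomorphism; the remaining steps are then a formal combination of the spectral enrichment of $\Sp$ and the compatibility of fibrations with filtered colimits.
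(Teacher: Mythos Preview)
Your proposal is correct and follows essentially the same route as the paper: reduce to checking that $P$ preserves (trivial) fibrations, use the essentially-levelwise description of strict (trivial) cofibrations in $\proSp$, observe that each $R^{A_i}\to R^{B_i}$ is a fibrant-projective (trivial) fibration by the enriched pushout-product/SM7 axiom, and then pass to the filtered colimit. The paper's proof is terser---it simply asserts that filtered colimits preserve levelwise (trivial) fibrations---while you spell out why this holds via compactness of the generating (trivial) cofibrations, but the argument is the same.
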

\begin{proof}
It suffices to show that the right adjoint $P$ preserves fibrations and trivial fibrations of pro-spectra.

Consider a trivial fibration or a fibration $f^{\op}\colon
Y_{\bullet}\to X_{\bullet}$ in the opposite category of the
pro-spectra, i.e., $f\colon X_{\bullet}\to Y_{\bullet}$ is a
trivial cofibration or a cofibration in the strict model
structure, which means $f$ is an essentially levelwise trivial
cofibration or an essentially levelwise cofibration, where the
word `essentially' means `up to reindexing'.

Let $f_{i}\colon X_{i}\to Y_{i}$, $i\in I$ be a levelwise trivial
cofibration or a levelwise cofibration representing $f$. Recall
that $PX_{\bullet}=\colim_{i\in I}R^{X_i}$,
$PY_{\bullet}=\colim_{i\in I}R^{Y_i}$. Then, $Pf\colon
\colim_{i\in I}R^{X_i}\to \colim_{i\in I}R^{Y_i}$ is a trivial
fibration or a fibration, respectively, in the fibrant-projective
model structure, since each $f_{i}$ induces a trivial fibration or
a fibration of representable functors in the fibrant-projective
model structure, and filtered colimits preserve levelwise trivial
fibrations and fibrations.
\end{proof}

\section{Homotopy model structure}\label{homotopy-model}
The main objective of our work is to classify homotopy functors
from spectra to spectra, up to homotopy. The most convenient way
to provide such a classification is to define a model category
structure on small functors with fibrant objects being exactly the
fibrant homotopy functors, and to find a more familiar Quillen
equivalent model.

In this section, we define the homotopy model structure using the
extension of the Bousfield-Friedlander localization technique,
\cite[Appendix A]{BF}, to non-functorial localization
constructions, \cite[Appendix A]{Duality}.

We start from the fibrant-projective model structure on the
category of small functors (i.e., weak equivalences are levelwise
in fibrant objects). Homotopy functors are small functors that
preserve weak equivalences of fibrant objects. If we precompose a
homotopy functor with a fibrant replacement in $\Sp$, we obtain a
homotopy functor in the classical sense (preserving all weak
equivalences), which is fibrant-projective equivalent to the
original functor.

Fibrant homotopy functors are the local objects with respect to the following class of maps:
\[
\cal H = \{R^{B}\to R^{A} \,|\, A\we B \text{ weak equivalence of fibrant objects in } \Sp\}.
\]

Recall that the class of generating trivial cofibrations for the fibrant-projective model structure is

\[
\cal J = \{R^{A}\otimes K\cofib R^{A}\otimes L \,|\, A\in \Sp \text{ fibrant; } K\trivcofib L \text{ generating triv. cofibration in } \Sp\}.
\]

\subsection{Construction of homotopy localization}
We formulate our construction and argumentation in such a way that
it will be evident that the category of spectra may be replaced by
any closed symmetric monoidal combinatorial model category.

If we were able to localize with respect to the proper class of
maps $\cal H$, we would be done, since $\cal H$-local functors are
exactly the homotopy functors. Instead, for each particular
functor $F\in \Sp^\Sp$ we choose a cardinal $\lambda_F$, which is
the maximum between the accessibility rank of the small (i.e.,
accessible) functor $F$ and the degree of accessibility of the
subcategory of weak equivalences in spectra. Then, we localize
this particular functor $F$ with respect to a \emph{set} of maps
$\cal H_{\lambda_F}\subset \cal H$, and argue that, for this
specific functor $F$, it is enough to invert the set $\cal
H_{\lambda_F}$. Of course, we do not obtain a functorial
localization construction in this way. However, the
(non-functorial) localization we do obtain has enough good
properties to ensure the existence of the localized model
structure. The detailed construction follows.

\begin{definition}
Let $F\in \Sp^\Sp$ be a small functor of accessibility rank $\mu$
and let $\Sp$ be a $\kappa$-combinatorial closed symmetric
monoidal model for spectra. In particular, the domains and the
codomains of the generating (trivial) cofibrations are
$\kappa$-presentable, and the class of weak equivalences is a
$\kappa$-accessible subcategory of the category of maps of
spectra.  Put $\lambda_F = \max\{\kappa, \mu\}^+\rhd \max\{\kappa,
\mu\}$ (the $+$ is essential to ensure that the subcategory of
weak equivalences in $\Sp$ is still $\lambda_F$-accessible),
$\Sp_{\lambda_F}\subset \Sp$, the subcategory of
$\lambda_F$-presentable objects. Then, we define
\[
\cal H_{\lambda_F} = \{R^{B}\to R^{A} \,|\, A\we B \text{ weak equivalence of fibrant objects in } \Sp_{\lambda_F}\}
\]
and
\[
\cal J_{\lambda_F} = \{R^{A}\otimes K\cofib R^{A}\otimes L \,|\, A\in \Sp_{\lambda_F} \text{ fibrant; } K\trivcofib L \text{ generating triv. cofibration in } \Sp\}.
\]

As usual, we say that a map $f\colon F\to G$ is an $\cal H_{\lambda_F}$-equivalence if for every cofibrant
replacement $\tilde f\to f$ and every  $\cal H_{\lambda_F}$-local functor $W$ the induced map $\hom(\tilde f, W)$
is  a weak equivalence of simplicial sets.
\end{definition}

\begin{remark}
\begin{enumerate}
\item $\cal H_{\lambda_F}$ and $\cal J_{\lambda_F}$ are sets of
maps, rather than proper classes, and hence it is possible to
apply the small object argument. \item Every $\cal H$-local
functor is also $\cal H_{\lambda_F}$-local, and hence every $\cal
H_{\lambda_F}$-equivalence is also an $\cal H$-equivalence. \item
Every $\lambda_F$-accessible functor taking fibrant values in
$\lambda_F$-presentable fibrant objects (i.e., satisfying the
right lifting property with respect to $\cal J_{\lambda_F}$) is
fibrant-projectively fibrant. \item Every $\cal
H_{\lambda_F}$-local functor that is also $\lambda_F$-accessible
is $\cal H$-local. \item Every $\cal H_{\lambda}$-equivalence of
$\lambda$-accessible functors is an $\cal H$-equivalence.
\end{enumerate}
\end{remark}

We form the set of horns on $\cal H_{\lambda_F}$ by first
replacing every map in $\cal H_{\lambda_F}$ with a cofibration,
obtaining the set $\tilde{\cal H}_{\lambda_F}$, and then forming a
box product with every generating cofibration in $\Sp$:
\[
\Hor(\cal H_{\lambda_F})=\{
A\otimes L \coprod_{A\otimes K} B\otimes K \to B\otimes L \,|\,
(A\cofib B) \in \tilde{\cal H}_{\lambda_F} \text{ and } K\cofib L \text{ a gen. cofib. in } \Sp
\}
\]

It is well known (see, e.g., \cite{Hirschhorn}) that if a
fibration $X\to \ast$ has the right lifting property with respect
to $\Hor(\cal H_{\lambda_F})$, $X$ is $\cal H_{\lambda_F}$-local,
and therefore, in order to construct a localization of an $F\in
\Sp^\Sp$ with respect to $\cal H_{\lambda_F}$, it suffices to
apply the small object argument for the map $F\to \ast$ with
respect to the set $\cal L= \Hor(\cal H_{\lambda_F})\cup \cal
J_{\lambda_F}$. We obtain a factorization $F\cofib Q(F)\fibr
\ast$, where the cofibration is an $\cal L$-cellular map and the
fibration has the right lifting property with respect to $\cal K$.

We omit the standard verification based on the left properness of $\Sp^\Sp$, \cite[Section~4]{Duality},
that the cofibration $F\cofib QF$ is an $\cal H_{\lambda_F}$-equivalence,
and conclude that $QF$ is a homotopy localization of $F$ with respect to $\cal H_{\lambda_F}$.

Notice that $QF$ is obtained as a colimit of
$\lambda_F$-accessible functors, and therefore $QF$ is itself a
$\lambda_F$-accessible functor. However, the class of weak
equivalences in spectra is $\lambda_F$-accessible, and hence every
weak equivalence is a $\lambda_F$-filtered colimit of weak
equivalences between $\lambda_F$-presentable objects. Fibrant
objects in spectra are closed under $\lambda_F$-filtered colimits,
and every spectrum is a $\lambda_F$-filtered colimit of
$\lambda_F$-presentable spectra. Combining these facts with the
$\lambda_F$-accessibility of $QF$, we conclude that $QF$ is a
fibrant homotopy functor in the fibrant-projective model
structure. Moreover, the cofibration $\eta_F\colon F\cofib QF$ is
an $\cal H$-equivalence, since any $\cal
H_{\lambda_F}$-equivalence is an $\cal H$-equivalence. In other
words, we have constructed a homotopy localization of $F$ with
respect to the class $\cal H$ of maps. The only disadvantage of
our construction is the lack of functoriality, since it depends on
the choice of the cardinal $\lambda_F$ specific for each $F$.

Since $Q$ is not a functor, we have to define separately its
action on maps. Given a natural transformation of functors
$f\colon F\to G$, we define $Qf$ as a lifting in the diagram
\[
\xymatrix{
F
\ar@{^(->}[d]_{\eta_F}
\ar[r]^f
      &  G
         \ar[r]^{\eta_G} & QG
                 \ar@{->>}[d]\\
QF
\ar[rr]
\ar@{-->}[urr]|{Qf}
       &  &  \ast
}
\]
The lift exists since the left vertical map is $\cal L$-cellular and the right vertical map
is $\cal L$-injective by construction.

Of course, we will have to choose $Qf$ out of many maps that are
homotopic to each other, but the important property satisfied by
any of these choices is the commutativity of the square

\begin{equation} \label{A2}
\xymatrix{
F
\ar@{^(->}[d]_{\eta_F}
\ar[r]^f
      &  G
         \ar@{^(->}[d]^{\eta_G} \\
QF
\ar[r]_{Qf}
       & QG.
}
\end{equation}

\begin{proposition}\label{H-eq}
Let $f\colon F\to G$ be  a map of two functors. Then, $Qf$ is a
weak equivalence iff $f$ is an $\cal H$-equivalence.
\end{proposition}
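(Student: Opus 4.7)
The plan is to use the commutative square~(\ref{A2}) together with the two-out-of-three property of $\cal H$-equivalences and the standard Bousfield fact that an $\cal H$-equivalence between $\cal H$-local fibrant objects is a weak equivalence.

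First I would record the two facts about the square that are immediate from the construction of $Q$. Both $\eta_F\colon F\cofib QF$ and $\eta_G\colon G\cofib QG$ are $\cal H$-equivalences: by construction each is an $\cal H_{\lambda_F}$- (respectively $\cal H_{\lambda_G}$-) equivalence between $\lambda_F$- (resp.\ $\lambda_G$-) accessible functors, and the remarks following the definition of $\cal H_{\lambda}$ guarantee this upgrades to an $\cal H$-equivalence. Moreover, $QF$ and $QG$ are fibrant homotopy functors, hence $\cal H$-local and fibrant in the fibrant-projective model structure, as already established just before the definition of $Qf$.

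For the direction ``$f$ is an $\cal H$-equivalence $\Rightarrow$ $Qf$ is a weak equivalence'', I would chase~(\ref{A2}): the composite $\eta_G\circ f=Qf\circ \eta_F$ is an $\cal H$-equivalence as a composite of two such, so $Qf$ is an $\cal H$-equivalence by two-out-of-three with $\eta_F$. Since $QF$ and $QG$ are $\cal H$-local and fibrant-projectively fibrant, the standard characterization of local equivalences between local objects (enriched Whitehead in the $\cal H$-localization) then forces $Qf$ to be a fibrant-projective weak equivalence. For the converse, if $Qf$ is a weak equivalence, it is \emph{a fortiori} an $\cal H$-equivalence; then $Qf\circ \eta_F=\eta_G\circ f$ is an $\cal H$-equivalence, and two-out-of-three applied with $\eta_G$ yields that $f$ is an $\cal H$-equivalence.

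The only step requiring care is the ``local Whitehead'' step in the forward direction: we have not produced a full Bousfield model structure yet, only a non-functorial localization $Q$, so I would not appeal to the machinery of \cite{BF} or \cite{Hirschhorn} as a black box. Instead I would verify directly that for a cofibrant replacement $\widetilde{Qf}\to Qf$, the map $\hom(\widetilde{Qf},W)$ is a weak equivalence of simplicial sets for every $\cal H$-local $W$; taking $W=QF$ and $W=QG$ themselves and using the enrichment of the fibrant-projective model structure (left properness was already used for $\eta_F$) produces a homotopy inverse to $Qf$ up to fibrant-projective weak equivalence. This is the point where the proof interacts with the structural properties of $\Sp^{\Sp}$, and thus the place I expect to spend the most care.
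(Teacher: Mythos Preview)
Your proof is correct and follows essentially the same route as the paper's: both directions hinge on two-out-of-three for $\cal H$-equivalences in square~(\ref{A2}), and the forward direction finishes with the $\cal H$-local Whitehead theorem (the paper simply cites \cite[3.2.13]{Hirschhorn} rather than verifying it by hand, so your caution there is warranted but not strictly necessary). For the converse you are in fact slightly more direct than the paper: you note that a weak equivalence is automatically an $\cal H$-equivalence and apply two-out-of-three immediately, whereas the paper first argues that $f$ is an $\cal H_{\max\{\lambda_F,\lambda_G\}}$-equivalence and then upgrades this to an $\cal H$-equivalence using the accessibility of $F$ and $G$. Your shortcut is valid and avoids that cardinal bookkeeping.
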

\begin{proof}
The if direction follows by the `2-out-of-3' property for $\cal
H$-equivalences applied to a commutative square (\ref{A2}) and the
$\cal H$-local Whitehead theorem (cf., \cite[3.2.13]{Hirschhorn}):
an $\cal H$-local equivalence of $\cal H$-local functors is a weak
equivalence.

The only if direction follows since, if $Qf$ is a weak
equivalence, $f$ is an $\cal
H_{\max\{\lambda_F,\lambda_G\}}$-equivalence by  the `2-out-of-3'
property for $\cal H_{\max\{\lambda_F,\lambda_G\}}$-equivalences,
but $\cal H_{\max\{\lambda_F,\lambda_G\}}$-equivalence of
$\max\{\lambda_F,\lambda_G\}$-accessible functors is an $\cal
H$-equivalence.
\end{proof}

\subsection{Localization of the model structure}\label{verification}
The lack of functoriality of the homotopy localization $Q$ does
not allow us to apply Bousfield-Friedlander localization
machinery, \cite[Appendix A]{BF}. Instead, we will use the
generalization of their localization theorem developed in
\cite[Appendix A]{Duality}.

In order to apply this generalization of Bousfield-Friedlander
machinery, we need to verify a number of properties of the
localization construction $Q$.

The property \cite[A2]{Duality} requires precisely the commutativity of the diagram \ref{A2}, which we obtained by
construction.

The properties \cite[A3,A4]{Duality} are satisfied, since the
class of weak equivalences is defined as a map that, after a
cofibrant replacement, induces a weak equivalence on the mapping
spaces into every $\cal H$-local object $W$, since mapping out of
a retract diagram produces a retract diagram, and also any
commutative triangular diagram gives rise to a commutative
triangular diagram, which allows us to verify the 2-out-of-3
property.

In order to verify \cite[A5]{Duality}, for every commutative square
\begin{equation}\label{square}
\xymatrix{
F_1
\ar[r]
\ar[d] & F_2
         \ar[d]\\
F_3
\ar[r] & F_4
}
\end{equation}
Let $\lambda = \max\{\lambda_{F_i}\}^+_{1\leq i \leq 4} \rhd
\max\{\lambda_{F_i}\}_{1\leq i \leq 4}$ be a cardinal and
construct $Q'F_i$ exactly as $QF_i$  using only the cardinal
$\lambda$ instead of $\lambda_{F_i}$ for each $1\leq i\leq 4$.
Then, for all $1\leq i\leq 4$, there exists a factorization of the
coaugmentation map $\eta'_{F_i}\colon F_i\to Q'F_i$ as follows:
\[
\xymatrix{
F_i
\ar@{^(->}[r]^{\eta_{F_i}}
\ar@/_20pt/@{^(->}[rr]_{\eta'_{F_i}}
                & QF_i
                \ar[r]^{\dir{~}} & Q'F_i.
}
\]

Moreover, since the only obstruction for naturality of the
construction $Q$ is the choice of a different cardinal $\lambda_F$
for each functor $F$, here this obstruction is removed, and we
obtain a natural map of the diagram (\ref{square}) into the
commutative square
\[
\xymatrix{
QF_1
\ar[r]
\ar[d] & QF_2
         \ar[d]\\
QF_3
\ar[r] & QF_4
}
\]
giving rise to a commutative cube.

An additional verification is required in order for
\cite[A5]{Duality} to be satisfied: $Q'f$ must be a weak
equivalence iff $Qf$ is. By Proposition~\ref{H-eq}, it suffices to
show that $Q'f$ is  a weak equivalence  iff $f$ is an $\cal
H$-equivalence. Similar to the commutative square (\ref{A2}), we
have a commutative square
\[
\xymatrix{
F
\ar@{^(->}[d]_{\eta'_F}
\ar[r]^f
      &  G
         \ar@{^(->}[d]^{\eta'_G} \\
Q'F
\ar[r]_{Q'f}
       & Q'G.
}
\]
The vertical arrows are $\cal H$-equivalences by construction, and
hence the `2-out-of-3' property for $\cal H$-equivalences implies
that $f$ is an $\cal H$-equivalence if and only if $Q'f$ is.

The last property of the homotopical localization $Q$ that
requires verification in order to conclude that there exists a
$Q$-localized model structure on $\Sp^\Sp$ is \cite[A6]{Duality}:
for all pullback squares
\begin{equation}\label{A6-square}
\xymatrix{
W
\ar[d]_{g}
\ar[r]
      &  X
         \ar[d]^{f} \\
Y
\ar@{->>}[r]_{h}
       & Z,
}
\end{equation}
where $h$ is an $\cal H$-fibration (i.e., it has the right lifting
property with respect to all $\cal H$-equivalences, which are also
cofibrations) and $f$ is an $\cal H$-equivalence; also, $g$ is an
$\cal H$-equivalence.

Unfortunately, we do not have a simple description of $\cal
H$-equivalences (apart from the fact that they coincide with the
$Q$-equivalences, i.e., with the maps converted to weak
equivalences by $Q$-construction), and therefore we will use the
properties of the stable model category satisfied by $\Sp^\Sp$.
Namely, we will use the fact that every homotopy pullback is a
homotopy pushout in the fibrant-projective model structure.

We start by replacing the commutative square \ref{A6-square} with
a weakly equivalent commutative square of cofibrant functors. If
we start from $\tilde W\trivfibr W$ and continue to factor maps
$\tilde W\trivfibr W \to Z$ and $\tilde W\trivfibr W \to X$ to
obtain $\tilde Z$ and $\tilde X$, respectively, there are two
possible ways to replace $Y$ by factoring $\tilde Z\trivfibr Z
\fibr Y$ or $\tilde X\trivfibr X \to Y$ to obtain two different
approximations, $\tilde Y_Z\trivfibr Y$ and $\tilde Y_X\trivfibr
Y$, respectively. Since the original square (\ref{A6-square}) is a
levelwise homotopy pullback square for values of the functor in
each fibrant spectrum, the outer square
\[
\xymatrix{
\tilde W
\ar@{^(->}[dd]
\ar@{^(->}[rr]
                & & \tilde X
                    \ar@{^(->}[d]
                    \ar@{-->}[dl]\\
                & \tilde Y
                  \ar@{-->}[dr]^{\dir{~}}
                           & \tilde Y_X
                             \ar@{->>}[d]^{\dir{~}}\\
\tilde Z
\ar@{^(->}[r]
\ar@{-->}[ur]
                & \tilde Y_Z
                  \ar@{->>}[r]^{\dir{~}} & Y
}
\]
is a fibrant-levelwise homotopy pullback and homotopy pushout square.
Put $\tilde Y= \tilde X\coprod_{\tilde W} \tilde Z \we Y$ to obtain a cofibrant approximation of the
original commutative square.

In order to verify whether $g$ is an $\cal H$-equivalence for each
$\cal H$-local (i.e., fibrant homotopy) functor $H$, we form a
commutative square of mapping spectra:
\[
\xymatrix{
H^{\tilde Y}
\ar@{->>}[d]^{\dir{~}}
\ar@{->>}[r]
                       & H^{\tilde Z}
                         \ar@{->>}[d]\\
H^{\tilde X}
\ar@{->>}[r] & H^{\tilde W},
}
\]
which is a homotopy pullback of spectra, and therefore also a
homotopy pushout of spectra.  Hence, the left properness of the
category of spectra implies that the right hand vertical map is a
weak equivalence of spectra. Therefore, the original map $g\colon
W\to Z$ is an $\cal H$-equivalence.

We conclude that by \cite[Theorem~A8]{Duality} there exists a $Q$-localization of the model structure,
i.e., this is a localization with respect to $\cal H$.

We finish this section with an extension of Proposition~\ref{Quillen-map} to the localized model structure.

\begin{proposition}
The adjunction constructed in Proposition~\ref{adjunction} is
still a Quillen pair after the localization, i.e., if we consider
the $Q$-local model structure on $\Sp^\Sp$ and the strict model
structure on $\text{pro-}\Sp$, the adjunction $(O,P)$ is a Quillen
adjunction.
\end{proposition}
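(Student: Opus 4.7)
The plan is to invoke the standard descent criterion for a Quillen adjunction under left Bousfield-type localization. Proposition~\ref{Quillen-map} already supplies the Quillen pair $(O,P)$ for the fibrant-projective model structure, and since the $Q$-local homotopy model structure on $\Sp^\Sp$ is obtained from the fibrant-projective one by localizing at the class $\cal H$, the cofibrations (and hence the trivial fibrations) in the two structures coincide. Consequently $O$ still preserves cofibrations and $P$ still preserves trivial fibrations; only one extra condition needs to be verified, and it may be phrased on either side of the adjunction.

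I would verify the right-hand condition: that $P$ takes every fibrant object of $(\proSp)^{\op}$ to a fibrant $\cal H$-local object of $\Sp^\Sp$. A fibrant object of $(\proSp)^{\op}$ is a cofibrant pro-spectrum in the strict model structure, i.e., an essentially levelwise cofibrant diagram $\{X_i\}_{i\in I}$. By the description of $P$ from Section~\ref{preliminaries}, $P\{X_i\}=\colim_i R^{X_i}$ is a filtered colimit of representable functors on cofibrant spectra. Each $R^{X_i}$ preserves weak equivalences between fibrant spectra and takes fibrant spectra to fibrant spectra, and both properties persist under filtered colimits in $\Sp$. Hence $P\{X_i\}$ is a fibrant homotopy functor, which is precisely $\cal H$-locality in the homotopy model structure.

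An equivalent alternative is to check the left-hand condition: given a map $R^B\to R^A$ in $\cal H$ induced by a weak equivalence $A\we B$ of fibrant spectra, the formulas in the proof of Proposition~\ref{adjunction} give $O(R^X)=\{X\}$ since $Z(R^X)=X$ by Yoneda, so $O(R^B\to R^A)$ is the opposite of $A\we B$, itself a strict weak equivalence in $\proSp$; no cofibrant replacement is needed since $R^A, R^B$ are already fibrant-projectively cofibrant. I do not foresee a substantive obstacle: the only point requiring care is that the non-functorial localization machinery of Section~\ref{homotopy-model} still permits the usual Hirschhorn-type descent criterion, and this is immediate because fibrant objects of the localized structure are exactly the $\cal H$-local fibrant-projectively fibrant functors while cofibrations are unchanged.
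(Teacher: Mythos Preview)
Your proposal is correct and follows essentially the same route as the paper: both reduce to the observation that $P$ applied to a cofibrant pro-spectrum is a fibrant homotopy functor. The paper phrases this slightly differently---it invokes Dugger's criterion \cite[8.5.4]{Hirschhorn} and checks that $P$ preserves fibrations between fibrant objects, citing \cite[A.10]{Duality} for the fact that a fibrant-projective fibration between $Q$-local functors is a $Q$-fibration---whereas you invoke the packaged descent criterion whose proof consists of exactly these two ingredients.
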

\begin{proof}
By Dugger's lemma \cite[8.5.4]{Hirschhorn}, it suffices to check
that the right adjoint $P$ preserves fibrations between fibrant
objects and all trivial fibrations.

Trivial fibrations did not change after the localization, and
therefore it suffices to show that $P$ preserves fibrations of
fibrant objects. Let $f{^\op}\colon X_\bullet \to Y_\bullet$ be a
fibration of fibrant objects in $(\text{pro-}\Sp)^{\op}$. Then,
$f\colon Y_\bullet \to X_\bullet$ is an essentially levelwise
cofibration of essentially levelwise cofibrant objects (i.e., up
to reindexing). Choose a representative for $f$, which is a
commutative diagram of cofibrations between cofibrant spectra and
apply $P$. For such a representative, $Pf$ is a filtered colimit
of fibrations of functors represented in cofibrant spectra, i.e.,
a filtered colimit of projectively fibrant functors preserving
weak equivalences of fibrant objects. In other words, $Pf$ is a
fibration of homotopy functors. Homotopy functors are precisely
the $Q$-local functors, i.e., $Pf$ is a $Q$-fibration by
Lemma~\cite[A.10]{Duality}.
\end{proof}

The rest of the paper is devoted to the proof that this Quillen map is indeed a Quillen equivalence.
In other words, we will show that for every cofibrant functor $F\in \Sp^\Sp$,
every fibrant $X_\bullet \in (\text{pro-}\Sp)^{\op}$, and every map $f\colon F\to PX_\bullet$ in $\Sp^\Sp$,
the map $f$ is a $Q$-equivalence if and only if the corresponding map $f^\sharp\colon OF\to X_\bullet$
is a weak equivalence in $(\text{pro-}\Sp)^{\op}$.

\section{All homotopy functors are linear}\label{section-homotopy-linear}
The first reduction in our classification problem is to show that every small homotopy functor is linear,
i.e., that they take homotopy pushouts to homotopy pullbacks. We shall classify the linear functors in the next section.

\begin{proposition}\label{homotopy-linear}
Every small functor $F\in \Sp^\Sp$ taking weak equivalences to weak equivalences also takes homotopy pushouts
to homotopy pullbacks.
\end{proposition}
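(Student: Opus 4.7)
The plan is to deduce linearity from the automatic spectral enrichment of small functors, paralleling the algebraic statement (cited in the introduction) that an additive functor of chain complexes preserving quasi-isomorphisms is triangulated. I would carry out the argument in three stages.

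First, since $F$ is small, hence a small weighted colimit of representables, and since enriched left Kan extensions of $\Sp$-enriched functors remain $\Sp$-enriched, $F$ inherits natural maps of spectra $\map_{\Sp}(X,Y) \to \map_{\Sp}(F(X),F(Y))$ for all $X,Y\in\Sp$. Restricting to fibrant $X,Y$ (where $F$ is a homotopy functor and these spectra compute the derived mapping spectra) and passing to $\pi_0$ yields a homomorphism of abelian groups $[X,Y] \to [F(X),F(Y)]$: $F$ descends to an additive functor on $\Ho(\Sp)$ that carries null maps to null maps.

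Second, I would use additivity to show $F$ preserves finite wedges. In the additive category $\Ho(\Sp)$, the biproduct $X\vee Y$ is characterized by the decomposition $\iota_X\pi_X + \iota_Y\pi_Y = \id_{X\vee Y}$ together with $\pi_X\iota_X = \id_X$, $\pi_Y\iota_Y = \id_Y$ and vanishing cross-terms. Since $F$ is additive on morphism groups and descends to $\Ho(\Sp)$, applying $F$ transports this biproduct diagram, exhibiting $F(X\vee Y)$ as a biproduct of $F(X)$ and $F(Y)$; in particular the canonical assembly map $F(X)\vee F(Y) \to F(X\vee Y)$ is a weak equivalence.

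Third, I would convert wedge-preservation into linearity. A homotopy pushout square in $\Sp$ is weakly equivalent, after cofibrant replacement, to a cofiber sequence $A \to B\vee C \to D$, and by step two its $F$-image is $F(A) \to F(B)\vee F(C) \to F(D)$ with null composite (by additivity of $F$ on $[A,D]$). Using the full spectral enrichment of $F$, not just its $\pi_0$, $F$ promotes to a triangulated functor $\Ho(\Sp)\to\Ho(\Sp)$, so the comparison $F(A) \to \textup{hofib}\bigl(F(B)\vee F(C) \to F(D)\bigr)$ is a weak equivalence. Since fiber and cofiber sequences coincide in the stable category $\Sp$, this is exactly the statement that $F$ sends the original homotopy pushout to a homotopy pullback.

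The main obstacle I anticipate is precisely this last stage: the passage from additivity (preservation of biproducts) to exactness (preservation of cofiber/fiber sequences). Purely additive functors on triangulated categories are not automatically triangulated, so one must leverage the full spectral enrichment of $F$ rather than just its $\pi_0$, and this requires careful handling of fibrant-cofibrant replacements so that the enriched hom-spectra really compute the derived mapping spectra. This is the topological analogue of the algebraic fact that an additive functor on chain complexes preserving quasi-isomorphisms induces a triangulated derived functor, and its rigorous justification is the real content of this section.
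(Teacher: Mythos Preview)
Your strategy is genuinely different from the paper's, and the gap you yourself flag in the third stage is real: you never actually argue why the spectral enrichment forces $F$ to preserve cofiber sequences. Asserting that ``$F$ promotes to a triangulated functor $\Ho(\Sp)\to\Ho(\Sp)$'' is precisely the conclusion of the proposition, so at that point you are restating what is to be proved rather than proving it. Additivity on $\pi_0$ of mapping spectra gives wedge preservation, but the passage from there to preservation of cofiber sequences requires an additional argument you have not supplied (and, as you note, is false for arbitrary additive functors on triangulated categories).

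The paper avoids this difficulty entirely by arguing cellularly rather than through enrichment. One takes a fibrant-projective cofibrant replacement $\tilde F$, which is built as a transfinite composition of cell attachments
\[
\xymatrix{
R^{A}\wedge K \ar[r]\ar@{^(->}[d] & F_n \ar@{^(->}[d]\\
R^{A}\wedge L \ar[r] & F_{n+1},
}
\]
with $A$ fibrant-cofibrant and $K\cofib L$ a generating cofibration of spectra. Each corner $R^A\wedge K$, $R^A\wedge L$ is manifestly linear on fibrant spectra (representables take homotopy limits to homotopy limits, and smashing with a fixed spectrum preserves homotopy pushouts, hence homotopy pullbacks, in $\Sp$). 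The inductive step is then the purely formal observation that a homotopy pushout of functors each of which sends a given homotopy pullback square to a homotopy pullback square again has this property: one is computing a homotopy colimit over $\cat K\times\cat K$ and using that homotopy colimits commute with homotopy colimits, together with the coincidence of homotopy pushouts and homotopy pullbacks in $\Sp$. Passing to the transfinite colimit finishes the argument.

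So the paper's route is more elementary and self-contained: it never needs to analyze the induced functor on $\Ho(\Sp)$ or invoke triangulated structure, and it sidesteps exactly the step you identify as the obstacle. Your outline could in principle be completed (the enrichment does give a natural assembly map $\Sigma F(X)\to F(\Sigma X)$, and one can try to show it is an equivalence and then deduce exactness), but as written it stops at the point where the work begins.
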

\begin{remark}
This phenomenon appears only for functors enriched over spectra, such as the small functors,
which are colimits of representables. There is a fully featured calculus theory for \emph{simplicial}
functors from spectra to spectra developed by Michael Ching, \cite{Ching}, where the $n$-excisive functors
appear for every $n$.
\end{remark}
\begin{proof}
Given a small homotopy functor $F$, consider its  cofibrant
replacement $\tilde F$, which is a cellular functor and has the
filtration
\[
0 = F_{0}\cofib \ldots F_{n}\cofib F_{n+1}\cofib \ldots F_{\lambda} = \tilde F,
\]
where $F_{n+1}$ is obtained from $F_{n}$ by attaching a cell:
\begin{equation}\label{attach-cell}
\xymatrix{
R^{A}\wedge K
\ar[r]
\ar@{^(->}[d]
                & F_n
                    \ar@{^(->}[d]\\
R^{A}\wedge L
\ar[r]
                & F_{n+1},
}
\end{equation}
where $A$ may be chosen to be a cofibrant and fibrant spectrum, \cite[Prop.~5.3]{Duality},
and $K\cofib L$ a generating cofibration in spectra.

Our first goal is to show that if $F_{n}$ in the diagram
(\ref{attach-cell}) preserves homotopy pullbacks (which coincide
with homotopy pushouts) of fibrant spectra, $F_{n+1}$  also
preserves homotopy pullbacks of fibrant spectra. It will give an
inductive step.

Notice that all three functors in the commutative square
(\ref{attach-cell}) preserve homotopy pullbacks of fibrant
spectra. We will show that $F_{n+1}$ preserves homotopy pushouts,
too. Since homotopy pullbacks are also homotopy pushouts, this is
a rather intuitive statement of the kind ``a homotopy pushout of
homotopy pushouts is a homotopy pushout again''. The formal
argument will say that homotopy colimits commute with homotopy
colimits, and hence if we apply the functors in
(\ref{attach-cell}) on a homotopy pushout square, we obtain a
diagram over the category $\cat K \times \cat K$, where \cat K is
the category
\[
\xymatrix{
\bullet
\ar[r]
\ar[d]
                & \bullet
                    \ar[d]\\
\bullet
\ar[r]
                & \bullet \ar@{}[r]_>>>>>>>>{,}&{}
}
\]
and conclude that the application of $F_{n+1}$ on any homotopy
pullback of fibrant spectra is a homotopy pullback again.

$\tilde F$ is a sequential homotopy colimit of functors preserving
homotopy pullbacks of fibrant objects, and therefore $\tilde F$
also preserves homotopy pullbacks of fibrant objects. However, in
addition, $\tilde F$ is a homotopy functor, and hence it is a
linear functor, which is fibrant-projective equivalent to the
original small homotopy functor $F$.
\end{proof}

\section{Classification of small linear functors}\label{section-classification}

In this section, we present a classification of small linear
functors. These are the small functors taking homotopy pushouts
(=homotopy pullbacks) to homotopy pullbacks. Note that, since
every small functor $F\in \Sp^{\Sp}$ is a weighted colimit of
representable functors, it preserves the zero spectrum up to
homotopy. Note also that every linear functors is a homotopy
functor.

Let $\cal F$ be the class of maps ensuring that $\cal F$-local objects are precisely the fibrant linear functors.
Namely,
\[
\cal F=\left\{\left.
\hocolim\left(
\vcenter{
\xymatrix@=10pt{
R^{D}
\ar[r]
\ar[d]  & R^{B}\\
R^{C}
}
}
\right)
\longrightarrow R^{A}
\right|
\vcenter{
\xymatrix@=10pt{
A
\ar[r]
\ar[d] & B
             \ar[d]\\
C
\ar[r]  &  D
}
}
\text{-- homotopy pullback in}\, \Sp
\right\}.
\]

Our goal is to show that every linear functor is (fibrant-projectively) weakly equivalent to a filtered
colimit of functors represented in cofibrant objects, i.e., to an image of a cofibrant pro-spectrum under the restricted
Yoneda embedding $P$ constructed in Section~\ref{preliminaries}. We begin with the lemma stating that these functors
are closed under filtered colimits. In other words, filtered colimits of filtered colimits of representable
functors are again filtered colimits.

\begin{lemma}\label{filtered-colimit}
The full subcategory generated by the filtered colimits of representable functors is closed under filtered colimits.
Moreover, the subcategory of filtered colimits of functors represented in cofibrant objects is also closed
under filtered colimits.
\end{lemma}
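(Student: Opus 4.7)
The plan is to identify the full subcategory of filtered colimits of representable functors with the essential image of the embedding $P\colon (\proSp)^{\op}\to \Sp^{\Sp}$ from Proposition~\ref{adjunction}. Since $P\{X_i\}_{i\in I} = \colim_{i\in I} R^{X_i}$, every pro-representable functor is visibly a filtered colimit of representables, and conversely every filtered colimit of representables is of this form; so the lemma reduces to showing that the essential image of $P$ is closed under filtered colimits in $\Sp^{\Sp}$.

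I would first verify that $P$ is fully faithful by the direct computation
\[
\hom_{\Sp^{\Sp}}(PX_\bullet, PY_\bullet) = \lim_i (PY_\bullet)(X_i) = \lim_i\colim_j \hom_\Sp(Y_j,X_i) = \hom_\proSp(Y_\bullet,X_\bullet),
\]
which uses Yoneda for $R^{X_i}$ and the commutation of filtered colimits with evaluation at a fixed spectrum. Consequently, any filtered diagram $\{F_\alpha\}_{\alpha\in A}$ of pro-representable functors is the image under $P$ of a cofiltered diagram $\{Y^\alpha\}_{\alpha\in A^{\op}}$ in $\proSp$, with $Y^\alpha=\{X_{\alpha,i}\}_{i\in I_\alpha}$.

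The heart of the argument is then the classical construction of cofiltered limits in pro-categories. After strictifying each transition $Y^\beta\to Y^\alpha$ (for $\alpha\le\beta$ in $A$) so that it arises from an honest functor between indexing categories---a routine cofinality argument---I obtain a functor $A\to\Cat$, $\alpha\mapsto I_\alpha$, and form the Grothendieck construction $J=\int_A I_\alpha$. Since $A$ is filtered and each $I_\alpha$ is filtered, $J$ is filtered, and the pro-spectrum $Y=\{X_{\alpha,i}\}_{(\alpha,i)\in J^{\op}}$ represents $\lim_\alpha Y^\alpha$ in $\proSp$. Applying $P$ and using Fubini for colimits,
\[
PY=\colim_{(\alpha,i)\in J}R^{X_{\alpha,i}}=\colim_\alpha\colim_i R^{X_{\alpha,i}}=\colim_\alpha F_\alpha,
\]
which exhibits $\colim_\alpha F_\alpha$ as a filtered colimit of representables. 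For the cofibrant addendum, the construction reuses exactly the same spectra $X_{\alpha,i}$, so if each $Y^\alpha$ is indexed in cofibrant spectra, so is $Y$.

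The main obstacle is the strictification step: a morphism in $\proSp$ is an equivalence class of ladders of maps modulo cofinal reindexing, so one has to replace the given cofiltered diagram by a level-equivalent one whose transitions literally come from functors between indexing categories before the Grothendieck construction applies. This reindexing procedure is well known in pro-category theory but is the only nontrivial ingredient in the argument.
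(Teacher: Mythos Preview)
Your proof is correct and follows the same underlying idea as the paper: identify the subcategory in question with the essential image of $P$, and use that $\proSp$ has cofiltered limits which $P$ takes to filtered colimits in $\Sp^{\Sp}$. The execution, however, differs. You lift the diagram $\{F_\alpha\}$ through the full faithfulness of $P$ and then explicitly construct the cofiltered limit in $\proSp$ by strictifying the transition maps and forming the Grothendieck construction. The paper instead applies the left adjoint $O$ directly to $F=\colim_i F_i$, obtaining $O(F)=\lim_i O(F_i)$ in $\proSp$ automatically since $O$ preserves colimits; it then uses that constant pro-objects are co-small (i.e., $\hom_{\proSp}(\lim_i\{X_{i,\bullet}\},-)=\colim_i\hom_{\proSp}(\{X_{i,\bullet}\},-)$ on constant targets) to conclude $P(O(F))=\colim_i F_i=F$. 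This sidesteps the strictification step you flag as the main obstacle, because the functoriality of $O$ absorbs the coherence issue. For the cofibrant addendum the paper invokes a model-categorical fact---cofiltered limits of levelwise cofibrant pro-spectra are cofibrant in the strict model structure, hence essentially levelwise cofibrant---whereas your explicit construction makes it transparent that no new representing spectra appear. Your route is more self-contained (it does not need the adjoint $O$), at the price of the reindexing lemma; the paper's is shorter once the adjunction from Proposition~\ref{adjunction} is available.
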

\begin{proof}
Let $\cal I$ be a filtered category, and for each $i\in \cal I$
let $\cal J_i$ be a filtered category. Suppose that
$F_i=\colim_{j\in \cal J_i} R^{X_{i,j}}$ for some $X_{i,j}\in
\Sp$, and $F=\colim_{i\in \cal I} F_i$. Then, we need to show that
$F$ may be represented as a filtered colimit of representable
functors.

Applying the left adjoint $O$ on the functor $F$, we obtain a
pro-object $\{X_j\}_{j\in \cal J}$ for some filtered category
$\cal J$.
\[
\{X_\bullet\} = O(F)= O(\colim_{i\in \cal I} F_i) = {\colim_{i\in \cal I}}^{(\text{pro-}\Sp)^{\op}} O(F_i) = {\lim_{i\in \cal I}}^{\text{pro-}\Sp}\{X_{i,\bullet}\}.
\]

However, if we apply $P$ on $\{X_\bullet\}$, we recover $F$ again:

\begin{multline*}
P(\{X_\bullet\})(-)= \hom_{\text{pro-}\Sp}(\{X_\bullet\},-) \\
\shoveright{= \hom_{\text{pro-}\Sp}({\lim_{i\in \cal I}}^{\text{pro-}\Sp} \{X_{i,\bullet}\},-) \qquad \text{(constant pro-spaces are co-small)}}\\
 =\colim_{i\in \cal I}(\colim_{j\in \cal J_i} R^{X_{i,j}})= \colim_{i\in \cal I} F_i = F.
\end{multline*}

Therefore, $F = P(\{X_\bullet\}) = \colim_{j\in \cal J} R^{X_j}$ is a filtered colimit of representable functors.

Suppose now that all $X_{i,j}\in \Sp, \, i\in \cal I, \, j\in \cal
J_i$ are cofibrant spectra. Then, $O(F)=\{X_\bullet\}$ is a
cofibrant pro-spectrum as a cofiltered inverse limit of cofibrant
pro-spectra $\{X_{i,\bullet}\}$ in the class-fibrantly generated
strict model structure on pro-spectra \cite{pro-spaces}. In other
words, $\{X_\bullet\}$ is an essentially levelwise cofibrant
pro-spectrum, and hence $P(\{X_\bullet\})$ is a filtered colimit
of functors represented in cofibrant spectra.
\end{proof}

\begin{proposition}\label{F-euiv-filtered}
Let $F\in \Sp^{\Sp}$ be a linear functor. Then there exists a
filtered diagram $J$ and a functor $G=\colim_{j\in J}R^{X_j}$ with
cofibrant $X_j\in \Sp$ for all $j\in J$ and a weak equivalence
$f\colon \tilde F\to G$ for some cellular approximation $\tilde
F\we F$ in the fibrant-projective model structure.
\end{proposition}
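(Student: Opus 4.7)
The plan is to exhibit the desired filtered colimit as $G=PO\tilde F$, where $(O,P)$ is the adjunction from Proposition~\ref{adjunction}, and to show the unit $\eta\colon \tilde F\to PO\tilde F$ is a fibrant-projective weak equivalence. By construction $PO\tilde F$ is a filtered colimit of representable functors, so only the cofibrancy of the representing spectra will require a separate adjustment.

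First, I take a cellular cofibrant replacement $\tilde F\we F$ in the fibrant-projective model structure. Since $\Sp^\Sp$ is class-finitely presentable (as used in the proof of Proposition~\ref{adjunction}), $\tilde F$ also admits a decomposition as a filtered colimit $\tilde F=\colim_{i\in I}C_i$ where each $C_i=A_{i,k}\star_k R^{B_{i,k}}$ is a finite weighted colimit of representables with compact weights $A_{i,k}$. Applying $O$ yields the pro-spectrum $\{ZC_i\}_{i\in I}$ with $ZC_i=\lim_k B_{i,k}^{A_{i,k}}$ (a finite limit in $\Sp$), and $PO\tilde F=\colim_{i\in I} R^{ZC_i}$.

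The crucial step is to verify that $\eta$ is a fibrant-projective weak equivalence. Evaluating at a cofibrant-fibrant $X\in\Sp$, both sides are filtered colimits over $I$, so it suffices to show that each cellwise comparison
\[
C_i(X)=\colim_k\bigl(A_{i,k}\wedge\hom_\Sp(B_{i,k},X)\bigr)\longrightarrow \hom_\Sp(ZC_i,X)
\]
is a weak equivalence. Spanier-Whitehead duality for the compact weights $A_{i,k}$ identifies $B_{i,k}^{A_{i,k}}\simeq B_{i,k}\wedge DA_{i,k}$ and $A_{i,k}\wedge\hom_\Sp(B_{i,k},X)\simeq\hom_\Sp(B_{i,k}\wedge DA_{i,k},X)$, while stability of $\Sp$ converts the finite $k$-indexed limit defining $ZC_i$ into an equivalent finite colimit; assembling these natural identifications yields the desired cellwise equivalence. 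Linearity of $F$ enters here by ensuring that the pointwise-evaluated finite colimit $C_i(X)$ and the hom-out-of-finite-limit $\hom_\Sp(ZC_i,X)$ represent the same homotopy type in the stable category $\Sp$.

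Finally, to ensure the representing spectra in $G$ are cofibrant, I replace $O\tilde F$ by an essentially levelwise cofibrant representative $\{X_j\}_{j\in J}$ in the strict model structure on pro-spectra (as in the proof of Lemma~\ref{filtered-colimit}); then $G=P\{X_j\}=\colim_{j\in J}R^{X_j}$ is the required filtered colimit of functors represented in cofibrant spectra, and composing $\eta$ with the fibrant-projective weak equivalence $PO\tilde F\we G$ arising from this cofibrant replacement provides $\tilde F\we G$. The principal obstacle is the pointwise verification above, which requires careful bookkeeping with Spanier-Whitehead duality for compact weights, stability of $\Sp$ to interchange finite limits and colimits, and the linearity of $F$ to align the two sides of the comparison.
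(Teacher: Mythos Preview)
Your strategy---take $G=PO\tilde F$ and prove the unit $\eta\colon\tilde F\to PO\tilde F$ is a fibrant-projective weak equivalence---is exactly the derived-unit construction the paper carries out in Section~\ref{section-localization-construction}. There, however, the proof that this map is a weak equivalence (Lemma~\ref{derived-unit1} together with the $\cal F$-local Whitehead theorem) \emph{relies on} Proposition~\ref{F-euiv-filtered} via Corollary~\ref{classification}. So your approach, if it appealed to those later results, would be circular; and your attempt to argue directly has a genuine gap at the ``crucial step.''

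The cellwise comparison $C_i(X)\to\hom_\Sp(ZC_i,X)$ is \emph{not} a weak equivalence in general. Already for $C_i=R^A\vee R^B$ one has $ZC_i=A\times B$, and the map at a fibrant $X$ is $\hom(A,X)\vee\hom(B,X)\to\hom(A\times B,X)$; since $A\times B$ need not be cofibrant, the right-hand side need not compute the derived mapping spectrum, so the map need not be a weak equivalence at the point-set level. More seriously, your appeal to ``stability of $\Sp$ converts the finite $k$-indexed limit defining $ZC_i$ into an equivalent finite colimit'' is false for arbitrary finite weighted diagrams: stability identifies finite \emph{homotopy} products with coproducts and pushouts with pullbacks, but an equalizer is only the loops of the corresponding coequalizer, and a general strict weighted (co)limit need not be a homotopy (co)limit at all. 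Finally, your invocation of the linearity of $F$ is inert: linearity is a property of $F$, not of the individual compact pieces $C_i$, and nothing in your argument uses it.

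The paper's proof avoids this by working not with weak equivalences but with $\cal F$-equivalences throughout a cellular induction. The key maneuver is that a finite coproduct $R^{\hat U}\vee R^{\hat V}$ is $\cal F$-equivalent (not weakly equivalent) to $R^{\hat U\times\hat V}$, because the relevant square is an instance of a map in $\cal F$. Pushing this through each cell attachment and each finite coproduct yields an $\cal F$-equivalence from $\tilde F$ to a filtered colimit of representables with cofibrant representing spectra; only at the end is linearity of $F$ used, to invoke the $\cal F$-local Whitehead theorem and upgrade the $\cal F$-equivalence to a fibrant-projective weak equivalence. That final step is precisely where linearity enters, and it cannot be pushed back to the level of the individual $C_i$.
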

\begin{proof}
Since $F$ is a linear functor, it is also a homotopy functor, and
hence there exists a cellular approximation $\tilde F\we F$ such
that for some cardinal $\lambda$ there is a transfinite sequence
of functors $\tilde F = \colim_{i\leq \lambda}F_{i}$, and $F_{i}$
is obtained from $F_{i-1}$ by attaching a generating cofibration
of the form $A\wedge R^{\hat X}\cofib B\wedge R^{\hat X}$ for
every successor cardinal $i\leq \lambda$ and
$F_{i}=\colim_{a<i}F_{a}$ for every limit ordinal $i\leq \lambda$.
The cofibration $A\cofib B$ is a generating cofibration in $\Sp$,
and therefore, $A$ and $B$ are compact spectra. Moreover, the
representing object $\hat X$ may be chosen to be cofibrant, since
$\tilde F$ is a homotopy functor by \cite[Prop.~5.3]{Duality}.

By \cite[Lemma~3.3]{Chorny-Brownrep}, there exists a countable
sequence $\{F_{k}'\}_{k<\omega}$ such that $F'_{0}=0$,
$F=\colim_{k<\omega}F_{k}'$ and for each $k>0$ there is a pushout
square
\[
\xymatrix{
\displaystyle{\coprod_{s\in S_{k-1}} A_{s}\wedge R^{\hat X_{s}} }
\ar[r]
\ar@{^{(}->}[d]         &F'_{k-1}
                    \ar[d]\\
\displaystyle{\coprod_{s\in S_{k-1}} B_{s}\wedge R^{\hat X_s}}
\ar[r]           &F'_{k},\\
}
\]
where the coproduct is indexed by the subset $S_{k-1}\subset \lambda$ corresponding to the cells coming
from various stages of the original sequence $\{F_{i}\}_{i\leq \lambda}$, such that their attachment maps
factor through the $(k-1)$-st stage of the previously constructed sequence.

The coproduct of maps in the commutative square above is a
filtered colimit of finite coproducts over the filtering $J_{k-1}$
of the finite subsets of $S_{k-1}$. Let us think of the constant
object $F'_{k-1}$ as a filtered colimit of the constant diagrams
over the same filtering $J_{k-1}$. However, colimits over
$J_{k-1}$ commute with pushouts, and hence we obtain the
representation of $F'_{k}$ as a filtered colimit of pushouts of
the following form
\begin{equation}\label{pushout-square}
\xymatrix{
\displaystyle{\coprod_{s\in S_{k-1,j}} A_{s}\wedge R^{\hat X_{s}} }
\ar[r]_<<<<{\varphi_{k-1,j}}
\ar@{^{(}->}[d]         &F'_{k-1}
                    \ar[d]\\
\displaystyle{\coprod_{s\in S_{k-1,j}} B_{s}\wedge R^{\hat X_s}}
\ar[r]           &F_{k,j},\\
}
\end{equation}
where $S_{k-1,j}\subset S_{k-1}$ is a finite subset corresponding to the element $j\in J_{k-1}$.

Now, by \cite[Lemma~7.1]{Duality}, there are weak equivalences in
the fibrant projective model category: $A_{s}\wedge
R^{\hat{X}_{s}}\simeq R^{\hat{X}_{s}\wedge DA_{s}}$ and
$B_{s}\wedge R^{\hat{X}_{s}}\simeq R^{\hat{X}_{s}\wedge DB_{s}}$.
Moreover, any finite coproduct of representable functors is $\cal
F$-equivalent to a representable functor by an inductive argument
on the number of terms that begins with an observation that a
coproduct of two representables $R^{\hat U}\sqcup R^{\hat V}$ is
$\cal F$-equivalent to $R^{\hat U\times \hat V}$, since the map
$R^{\hat U}\sqcup R^{\hat V}\simeq\hocolim(R^{\hat U}\leftarrow
R^{0}\to R^{\hat V})\longrightarrow R^{\hat U\times\hat V}$ is an
element in $\cal F$ corresponding  to the homotopy pullback square
\[
\xymatrix{
\hat U\times\hat V
\ar[r]
\ar[d]                     & \hat U
                                 \ar[d]\\
\hat V
\ar[r]                    &  0.\\
}
\]

In other words, the entries on the left-hand side of the push-out square (\ref{pushout-square})
are $\cal F$-equivalent to representable functors with fibrant and cofibrant spectra as representing objects.

Suppose for induction that there is an $\cal F$-equivalence
$F'_{k-1}\to \colim_{l\in L_{k-1}}R^{Y_l}$, where $L_{k-1}$ is a
filtered category and the representable functors have fibrant and
cofibrant spectra as representing objects. Then, we obtain a
morphism of the pushout diagram (\ref{pushout-square}) into a
commutative square (which is also a homotopy pushout) composed of
filtered colimits of representable functors constructed as follows
\begin{equation}\label{dash-arrow}
\xymatrix{
R^{\left(\displaystyle{\prod_{s\in S_{k-1,j}}}\hom(A_s, \hat{X_s})\right)_{\text{cof}}}
\ar[rrr]_{\varphi_{k-1,j}}
\ar[ddd]        &  &  & \displaystyle{\colim_{l\in L'_{k-1}}} R^{Y_l}
                        \ar[ddd]\\
 & \displaystyle{\coprod_{s\in S_{k-1,j}} A_{s}\wedge R^{\hat X_{s}}}
\ar@{^{(}->}[d]
\ar[r]
\ar[ul]             &F'_{k-1}
                    \ar[d]
                    \ar[ur]\\
 & \displaystyle{\coprod_{s\in S_{k-1,j}} B_{s}\wedge R^{\hat X_s}}
    \ar[r]
    \ar[dl]          & F_{k,j}
                        \ar@{-->}[dr]\\
R^{\left(\displaystyle{\prod_{s\in S_{k-1,j}}}\hom(B_s, \hat{X_s})\right)_{\text{cof}}}
\ar[rrr]        &  &  & \displaystyle{\colim_{l\in L'_{k-1}}} R^{Y'_l}.\\
}
\end{equation}
The diagonal maps on the left are obtained as compositions of the unit of the adjunction (\ref{adjunction})
with a map induced by the cofibrant approximations in pro-$\Sp$:
\begin{eqnarray*}
\left(\displaystyle{\prod_{s\in S_{k-1,j}}}\hom(A_s, \hat{X_s})\right)_{\text{cof}} \trivfibr \displaystyle{\prod_{s\in S_{k-1,j}}}\hom(A_s, \hat{X_s}),\\
\left(\displaystyle{\prod_{s\in S_{k-1,j}}}\hom(B_s, \hat{X_s})\right)_{\text{cof}} \trivfibr \displaystyle{\prod_{s\in S_{k-1,j}}}\hom(B_s, \hat{X_s}).
\end{eqnarray*}

The universal property of the unit of adjunction guarantees the existence of a natural map
\[
R^{\displaystyle{\prod_{s\in S_{k-1,j}}}\hom(A_s, \hat{X_s})} \longrightarrow \colim_{l\in L_{k-1}}R^{Y_l}.
\]
The corresponding map in the pro-category has a lift to the cofibrant replacement of the constant pro-spectrum,
since the pro-spectrum $\{Y_l\}_{l\in L_{k-1}}$ is (levelwise) cofibrant.
\[
\xymatrix{
 & \left(\displaystyle{\prod_{s\in S_{k-1,j}}}\hom(A_s, \hat{X_s})\right)_{\text{cof}}
    \ar@{->>}[d]^{\dir{~}} \\
 \{Y_l\}_{l\in L_{k-1}}
 \ar[r]
 \ar@{-->}[ur] & \displaystyle{\prod_{s\in S_{k-1,j}}}\hom(A_s, \hat{X_s})
}
\]
The source of the dashed map in the diagram above may be replaced
by an isomorphic pro-object $\{Y_l\}_{l\in L'_{k-1}}$ with a final
indexing subcategory $L'_{k-1}\subset L_{k-1}$, so that the
resulting map is reindexed into a natural transformation of
contravariant $L'_{k-1}$-diagrams with a constant diagram in the
target. The induced map in the category of functors is denoted by
$\varphi_{k-1,j}$, and it factors through every stage of the
colimit. Thus, the outer pushout diagram in (\ref{dash-arrow}) may
be viewed as a filtered colimit of pushout diagrams indexed by
$L'_{k-1}$.

Let $Y'_l = Y_l \times_{\left(\displaystyle{\prod_{s\in
S_{k-1,j}}}\hom(A_s, \hat{X_s})\right)_{\text{cof}}}
\left(\displaystyle{\prod_{s\in S_{k-1,j}}}\hom(B_s,
\hat{X_s})\right)_{\text{cof}}$. This is a homotopy pullback of
spectra, and hence $R^{Y'_l}$ is $\cal F$-equivalent to the
homotopy pushout of the corresponding representable functors in
the fibrant-projective model structure on the category of small
functors from spectra to spectra.

Taking the filtered colimit of these commutative squares indexed
by $L'_{k-1}$, we obtain the outer square of (\ref{dash-arrow}),
and since filtered colimits preserve both $\cal F$-equivalences by
\cite[Lemma~1.2]{CaCho} and homotopy pushouts, we conclude that
$\displaystyle{\colim_{l\in L'_{k-1}}} R^{Y'_l}$ is $\cal F$ to
the homotopy pushout of the outer square of (\ref{dash-arrow}).
Therefore, the dashed arrow in (\ref{dash-arrow}) is an $\cal
F$-equivalence. In other words, $F_{k,j}$ is $\cal F$-equivalent
to a filtered colimit of representable functors.

Therefore, $F'_{k}=\colim_{j\in J_{k-1}} F_{k,j}$ is a filtered colimit of functors $\cal F$-equivalent
to filtered colimits of representable functors, which, in turn, are $\cal F$-equivalent to filtered colimits
of representable functors by Lemma~\ref{filtered-colimit}.

Finally, $F=\colim_{k<\omega}F'_{k}$ is a countable sequential colimit of filtered colimits
of functors $\cal F$-equivalent to representable functors, which may be reindexed into a single filtered colimit
of functors $\cal F$-equivalent to representable functors by Lemma~\ref{filtered-colimit}.
\end{proof}

\begin{corollary}\label{classification}
Every small homotopy functor from spectra to spectra is fibrant-projective equivalent to a filtered colimit
of representable functors represented in cofibrant objects.
\end{corollary}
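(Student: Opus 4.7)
The plan is to chain together the two principal results already proved in Sections~\ref{section-homotopy-linear} and~\ref{section-classification}. Starting from an arbitrary small homotopy functor $F\in\Sp^{\Sp}$, I would first invoke Proposition~\ref{homotopy-linear} to upgrade the hypothesis ``homotopy functor'' to the stronger property ``linear functor'': every small functor preserving weak equivalences sends homotopy pushouts to homotopy pullbacks, because in $\Sp$ homotopy pushouts are themselves homotopy pullbacks, and the cellular filtration of a cofibrant replacement of $F$ consists of cells $R^{A}\wedge K\cofib R^{A}\wedge L$ represented in cofibrant-fibrant spectra, each of which is linear.

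Now that $F$ is known to be linear, I would feed it into Proposition~\ref{F-euiv-filtered}. This yields a cellular cofibrant replacement $\tilde F\we F$ in the fibrant-projective model structure together with a fibrant-projective weak equivalence $\tilde F\we G$, where $G=\colim_{j\in J}R^{X_j}$ is a filtered colimit of representable functors whose representing objects $X_j$ are cofibrant spectra. Concatenating the zig-zag $F\stackrel{\sim}{\leftarrow}\tilde F\stackrel{\sim}{\rightarrow}G$ exhibits $F$ as fibrant-projectively equivalent to such a filtered colimit, which is precisely the conclusion of the corollary.

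The main obstacle does not lie in this assembly but inside the two propositions being combined. Proposition~\ref{homotopy-linear} requires the transfinite induction along the cellular filtration, using the key observation that a homotopy pushout of homotopy pushouts is again a homotopy pushout in the stable setting of $\Sp$. Proposition~\ref{F-euiv-filtered} is the genuinely hard piece: one must reindex the transfinite filtration of $\tilde F$ into a countable tower via \cite[Lemma~3.3]{Chorny-Brownrep}, translate each cell attachment through Spanier-Whitehead duality \cite[Lemma~7.1]{Duality} into an $\cal F$-equivalence with a representable functor on a cofibrant spectrum, and then appeal to Lemma~\ref{filtered-colimit} to flatten the resulting iterated filtered colimits of representables into a single filtered colimit. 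Once those ingredients are in hand, the present corollary is a one-line consequence.
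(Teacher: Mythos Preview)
Your assembly is exactly the one the paper uses: invoke Proposition~\ref{homotopy-linear} to pass from ``homotopy'' to ``linear'', then feed into Proposition~\ref{F-euiv-filtered}. The only nuance worth flagging is that the paper's proof of the corollary does not read Proposition~\ref{F-euiv-filtered} as directly producing a fibrant-projective weak equivalence; rather, it extracts from that proposition only an $\cal F$-\emph{equivalence} $\tilde F\to G$, and then adds the extra observation that since $F$ is linear it is $\cal F$-local, and $G$ (as a filtered colimit of representables on cofibrant spectra) is likewise $\cal F$-local, so an $\cal F$-equivalence between $\cal F$-local objects is a genuine fibrant-projective equivalence (the $\cal F$-local Whitehead theorem). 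You have absorbed this Whitehead step into the literal statement of Proposition~\ref{F-euiv-filtered}; that is defensible as written, but be aware that the body of that proposition's proof only produces the $\cal F$-equivalence, so the Whitehead argument is doing real work here.
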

\begin{proof}
Every small homotopy functor $F\in \Sp^\Sp$ is linear by
Proposition~\ref{homotopy-linear}. Therefore, $F$ is $\cal
F$-local and, by Proposition~\ref{F-euiv-filtered}, is $\cal
F$-equivalent to a filtered colimit of representable functors
represented in cofibrant objects. However, $\cal F$-equivalence of
$\cal F$-local functors is a fibrant-projective equivalence.
\end{proof}

So far, we have shown that the fibrant objects in the homotopy
model structure constructed in Section~\ref{homotopy-model} are
fibrant projective equivalent to filtered colimits of
representable functors represented in cofibrant objects, i.e.,
they correspond to cofibrant pro-objects. Of course, a more
elegant way to state this classification result is to show that
the Quillen adjunction of Proposition~\ref{Quillen-map} is
actually a Quillen equivalence.

\begin{theorem}\label{main-theorem}
The Quillen adjunction
$\xymatrix{O\colon \Sp^\Sp
                \ar@/_/[r]  &
                                \proSp \!: P
                                \ar@/_/[l]}$
is a Quillen equivalence if $\Sp^\Sp$ is equipped with the
homotopy model structure and $\proSp$ is equipped with the strict
model structure.
\end{theorem}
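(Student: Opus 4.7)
The plan is to verify the standard criterion for Quillen equivalence: for every cofibrant $F \in \Sp^\Sp$ and every fibrant $X_\bullet \in (\proSp)^{\op}$, a morphism $f\colon F \to PX_\bullet$ is a $Q$-equivalence if and only if its adjoint $f^\sharp\colon OF \to X_\bullet$ is a weak equivalence in the strict model structure on pro-spectra. Equivalently, it is enough to show that the derived unit $\eta_F \colon F \to P(\wt{OF})$ is a $Q$-equivalence for every cofibrant $F$---where $\wt{OF}$ is a fibrant replacement of $OF$ in $(\proSp)^{\op}$, i.e.\ an essentially levelwise cofibrant replacement in $\proSp$---and that the derived counit is a weak equivalence for every fibrant $X_\bullet$.

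The key preliminary observation, streamlining both directions, is that whenever $Y_\bullet$ is essentially levelwise cofibrant in $\proSp$, the functor $PY_\bullet = \colim_j R^{Y_j}$ is already fibrant in the homotopy model structure. Each $R^{Y_j}$ with $Y_j$ cofibrant is a homotopy functor, and filtered colimits of homotopy functors remain homotopy functors, since both fibrant spectra and weak equivalences between them are preserved under filtered colimits. Consequently both $PX_\bullet$ and the target $Q'F := P(\wt{OF})$ of the derived unit are $Q$-local, so no further fibrant replacement on the right is required.

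For the derived counit, the restricted Yoneda embedding $P$ is fully faithful---a natural transformation $PY_\bullet \to PX_\bullet$ corresponds via the Yoneda lemma to a morphism $X_\bullet \to Y_\bullet$ in $\proSp$---so the ordinary counit yields an isomorphism $OPX_\bullet \cong X_\bullet$ in $(\proSp)^{\op}$. Together with the fact that $\mathbb{L}O$ is well-defined on the homotopy category (so its value at $PX_\bullet$ is independent of the cofibrant replacement chosen), this shows that the derived counit at any fibrant $X_\bullet$ is a weak equivalence.

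For the derived unit, it suffices to show that $F \to Q'F$ is an $\cal H$-equivalence, since $Q'F$ is already $Q$-local. By Corollary~\ref{classification}, every $Q$-local functor $W$ is fibrant-projective equivalent to some $PZ_\bullet$ with $Z_\bullet$ essentially levelwise cofibrant, so it is enough to prove that $\Map(Q'F, PZ_\bullet) \to \Map(F, PZ_\bullet)$ is a weak equivalence for every such $Z_\bullet$. The $(O,P)$-adjunction transforms both mapping spaces into $\Map_{\proSp}(Z_\bullet, OF)$---the source via the isomorphism $OQ'F = OP(\wt{OF}) \cong \wt{OF}$ combined with the weak equivalence $\wt{OF} \we OF$, and the target directly. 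The main obstacle is to carry out this comparison coherently at the level of derived mapping spaces, given that $O$ is defined only up to a choice of compact presentation and that $Q$ itself is non-functorial; both difficulties are resolved by identifying $Q'$ with the alternative localization construction of Section~\ref{section-localization-construction}, whose homotopy-coincidence with $Q$ is precisely the payoff of the classification.
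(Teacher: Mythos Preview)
Your plan follows essentially the same route as the paper: verify the derived unit/counit criterion, using the classification (Corollary~\ref{classification}) to identify $Q$-local objects with images of cofibrant pro-spectra under $P$, the full faithfulness of $P$ to make the ordinary counit an isomorphism, and the machinery of Section~\ref{section-localization-construction} to control the derived unit. The paper organizes the final argument as the direct two-way criterion (a map $F\to PX_\bullet$ is a $Q$-equivalence iff its adjoint is a strict equivalence), but the ingredients are the same.

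There is, however, one point where your reasoning does not go through as written. In the derived-counit step you argue that $OPX_\bullet \cong X_\bullet$ and then appeal to ``$\mathbb{L}O$ is well-defined'' to conclude that the derived counit is a weak equivalence. But well-definedness of $\mathbb{L}O$ only tells you that $O$ applied to two different \emph{cofibrant} replacements of $PX_\bullet$ are weakly equivalent; it does \emph{not} tell you that $O(\widetilde{PX_\bullet})\to O(PX_\bullet)$ is a weak equivalence, since $PX_\bullet$ is typically not cofibrant. This is exactly the content of Lemma~\ref{derived-unit} in the paper, which requires a separate argument (choose the cofibrant replacement $\hocolim_i R^{\hat X_i}$ and use that filtered colimits are homotopy colimits on both sides). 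The same issue recurs implicitly in your derived-unit paragraph when you pass from $\Map(Q'F,PZ_\bullet)$ to $\Map(\wt{OF},Z_\bullet)$: to compute the derived mapping space you must first cofibrantly replace $Q'F=P(\wt{OF})$, and relating $O$ of that replacement to $\wt{OF}$ again needs Lemma~\ref{derived-unit}. Once you insert that lemma at these two points, your plan is complete and matches the paper's proof.
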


The rest of the paper is devoted to the proof of this theorem.

\section{Alternative localization construction}\label{section-localization-construction}

In this section, we give an alternative localization of the
fibrant-projective model structure on $\Sp^\Sp$, which produces
homotopy approximations of small functors. It is better suited for
establishing that the Quillen map constructed in
Proposition~\ref{adjunction} is a Quillen equivalence.

\subsection{The localization construction}\label{localization-construction}
Given an arbitrary small functor $F\in \Sp^{\Sp}$, consider its
(non-functorial) cofibrant replacement in the fibrant-projective
model structure $\tilde F \trivfibr F$. Then, the derived unit of
the adjunction constructed in Proposition~\ref{adjunction} has the
right homotopy type of the localization we are constructing:
$u\colon \tilde F\to P(\widehat {O(\tilde F)})$. However, the
localization construction involves a coaugmentation map for every
functor $\eta: F\to LF$. Now, we factor $u$ into a cofibration
followed by a trivial fibration $\tilde F \cofib F_{1} \trivfibr
P\widehat{O\tilde F}$, and declare $LF= F\times_{\tilde F} F_{1}$.

We summarize our localization construction in the following
diagram:
\begin{equation}\label{L-constr}
\xymatrix{
\tilde F \ar@{->>}[dd]_{\dir{~}}
            \ar[rr]^{u_F}
            \ar@{^(->}[dr]  &           &{P\widehat{O\tilde F}}\\
                                    & F_{1} \ar@{->>}[ur]^{\dir{~}} \ar[d]\\
F \ar@{^(->}[r] & LF
}
\end{equation}

The construction of $LF$ depends on the choice of a cofibrant
replacement for $F$ and a factorization for $u_F$. We fix these
choices once and for all. Since the procedure described above is
homotopy meaningful, the homotopy type of $LF$ does not depend on
the choices we make.

The localization construction $L$ is defined also on morphisms.
Given a natural transformation $g\colon F\to G$ of small functors
from spectra to spectra, we proceed through the stages of the
definition of $L$, constructing at each stage a map corresponding
to $g$, making a choice in the non-functorial parts of the
definition of $L$. Namely, to obtain a map of cofibrant
replacements and the factorizations, we use the lifting axiom.
Such a choice is not functorial, and it is unique only up to
homotopy. Nevertheless, we have the following commutative diagram:

\[
\xymatrix{
P\widehat{O\tilde F}
\ar[rrr]^{P\widehat{Q\tilde g}}        &           &          & P\widehat{O\tilde G}\\
                                & F_{1}
                                  \ar@{->>}[ul]_{\dir{~}}
                                  \ar[r]^h
                                  \ar'[d][dd]       & G_{1} \ar@{->>}[ur]^{\dir{~}}
                                                    \ar'[d][dd]  \\
\tilde F
\ar@{->>}[dd]_{\dir{~}}
\ar[uu]^{u}
\ar[rrr]_{\tilde g}
\ar@{^(->}[ur]&       &      & \tilde G  \ar@{->>}[dd]_{\dir{~}}
                            \ar[uu]_{v}
                            \ar@{_(->}[ul]\\
            & LF \ar[r]_{Lg} & LG\\
F \ar[rrr]_{g}
   \ar@{^(->}[ur]^{\eta_{F}}  & & & G \ar@{_(->}[ul]_{\eta_{G}}
}
\]
This diagram defines a map $Lg$ for every natural transformation
$g$ and a morphism of maps $\eta_{g}\colon g\to Lg$.


We summarize this discussion in the following proposition.
\begin{proposition}\label{almost-natural}
For every natural transformation $g\colon F\to G$ of small
functors, the map $Lg\colon LF\to LG$ is defined and depends on
the choices required at various stages of its construction.
Moreover, there exist maps $\eta_{F}\colon F\to LF$ and
$\eta_{G}\colon G\to LG$ depending on the same choices and no
others, such that the square
\[
\xymatrix{
F
\ar[r]^{\eta_{F}}
\ar[d]_{g}
            & LF
            \ar[d]^{Lg} \\
G
\ar[r]_{\eta_{G}} & LG
}
\]
is commutative.
\end{proposition}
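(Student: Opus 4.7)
The plan is to thread $g\colon F\to G$ through each non-functorial step of the construction of $L$, using lifting axioms in the fibrant-projective and strict model structures to produce compatible choices at every stage. First, since $\tilde F$ is cofibrant and $\tilde G\trivfibr G$ is a trivial fibration, the composite $\tilde F\trivfibr F\toh{g}G$ lifts to a map $\tilde g\colon \tilde F\to \tilde G$. Applying the left adjoint $O$ yields a morphism $O\tilde g$ in $(\proSp)^{\op}$, and a second lifting problem---with the trivial cofibration $O\tilde F\trivcofib\widehat{O\tilde F}$ on the left and the fibration $\widehat{O\tilde G}\to \ast$ on the right, constrained by the composite $O\tilde F\toh{O\tilde g}O\tilde G\trivcofib\widehat{O\tilde G}$---produces a map $\widehat{O\tilde g}\colon \widehat{O\tilde F}\to \widehat{O\tilde G}$ on the fibrant replacements. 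Applying $P$ and invoking the naturality of the unit of $(O,P)$ together with the defining equation of this lift, one checks that $P\widehat{O\tilde g}\circ u_F = u_G\circ \tilde g$ on the nose.

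Next, the factorization $\tilde G\cofib G_1\trivfibr P\widehat{O\tilde G}$ admits a lift $h\colon F_1\to G_1$ extending $(\tilde F\toh{\tilde g}\tilde G\cofib G_1)$, since its required composite into $P\widehat{O\tilde G}$ equals $P\widehat{O\tilde g}\circ (F_1\trivfibr P\widehat{O\tilde F})$ by the previous paragraph. Reading $LF$ off diagram~(\ref{L-constr}) as the pushout $F\sqcup_{\tilde F}F_1$, the legs $\eta_G\circ g\colon F\to LG$ and $(F_1\toh{h}G_1\to LG)$ agree on $\tilde F$, so the universal property of the pushout yields $Lg\colon LF\to LG$. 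Taking $\eta_F,\eta_G$ to be the canonical legs into the respective pushouts, the square in the statement commutes by the very construction of $Lg$.

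No step presents a genuine obstacle; every lift exists by the relevant lifting axiom, and the dependence clause is automatic since no additional choices enter beyond those required to produce $\tilde g$, $\widehat{O\tilde g}$, $h$, and the universal map into $LG$. The only care needed is in the pushout step: one must verify that the two legs into $LG$ genuinely agree on $\tilde F$, which follows from the equalities secured above---namely that $\tilde g$ covers $g$ and that $h$ extends $(\tilde G\cofib G_1)\circ\tilde g$---together with the commutativity of the pushout square defining $LG$.
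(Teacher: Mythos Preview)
Your proof is correct and follows essentially the same approach as the paper: the paper's argument is the discussion immediately preceding the proposition, which constructs the big commutative cube by ``proceed[ing] through the stages of the definition of $L$, constructing at each stage a map corresponding to $g$ \ldots\ [using] the lifting axiom,'' and then reads off $Lg$ and the commuting square from that cube. You have simply made each lifting problem explicit and spelled out the pushout step (correctly reading $LF=F\sqcup_{\tilde F}F_1$ from diagram~(\ref{L-constr}), despite the paper's evident typo ``$F\times_{\tilde F}F_1$''), including the verification that the two legs into $LG$ agree on $\tilde F$.
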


Our goal is to compare the localization construction $L$ with the
non-functorial localization $Q$ previously constructed in
Section~\ref{homotopy-model}. However, first we need to prove that
$L$ is a homotopy localization construction in accordance with
Definition~\cite[A1]{Duality} and to verify the conditions
\cite[A2--A6]{Duality}. Proposition~\ref{almost-natural} above
verified the condition \cite[A2]{Duality}.

\subsection{Verification of homotopy idempotency}

\begin{proposition}\label{homotopy-idemp}
For all $F\in\Sp^\Sp$, the maps $\eta_{LF},L\eta_F \colon LF\to LLF$ are weak equivalences.
\end{proposition}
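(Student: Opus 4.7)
The plan is first to show $LF$ is a homotopy functor, and then to deduce both idempotency statements from the strict identity $OP = \id$ computed in the proof of Proposition~\ref{adjunction}, together with the classification in Corollary~\ref{classification}.

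The pushout defining $LF$ combines the trivial fibration $\tilde F\trivfibr F$ with the cofibration $\tilde F\cofib F_1$, so left properness of $\Sp^\Sp$ in the fibrant-projective model structure makes $F_1\to LF$ a weak equivalence. Composing with $F_1\trivfibr P\widehat{O\tilde F}$ yields $LF\simeq P\widehat{O\tilde F}$. Since $\widehat{O\tilde F}$ is fibrant in $(\proSp)^{\op}$, i.e., essentially levelwise cofibrant in $\proSp$, the functor $P\widehat{O\tilde F}$ is a filtered colimit of representables in cofibrant spectra, which is a fibrant object of the homotopy model structure by the localized version of Proposition~\ref{Quillen-map}. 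Hence $LF$ is a homotopy functor.

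To show $\eta_{LF}$ is a weak equivalence, I analyze the derived unit $u_{LF}\colon \widetilde{LF}\to P\widehat{O\widetilde{LF}}$. Both $\widetilde{LF}$ and $F_1$ are cofibrant approximations of $LF$, hence connected by a zigzag of weak equivalences of cofibrant objects; Ken Brown's lemma for the left Quillen functor $O$ then gives $O\widetilde{LF}\simeq OF_1$ in $(\proSp)^{\op}$. The crucial identification $OF_1\simeq \widehat{O\tilde F}$ is obtained from the adjoint of the trivial fibration $F_1\trivfibr P\widehat{O\tilde F}$: using $OP = \id$, this adjoint is a map $OF_1\to \widehat{O\tilde F}$, and its weak-equivalence status is verified via the explicit formula $O(\colim_i C_i)=\{ZC_i\}$ on cellular presentations combined with Corollary~\ref{classification}. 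Taking fibrant replacements gives $\widehat{O\widetilde{LF}}\simeq \widehat{O\tilde F}$, and applying the right Quillen functor $P$ (which preserves weak equivalences of fibrant objects) yields $P\widehat{O\widetilde{LF}}\simeq P\widehat{O\tilde F}\simeq \widetilde{LF}$. Therefore $u_{LF}$ is a fibrant-projective weak equivalence, and by 2-out-of-3 the cofibration in the factorization $\widetilde{LF}\cofib (LF)_1\trivfibr P\widehat{O\widetilde{LF}}$ is trivial. Its pushout along $\widetilde{LF}\trivfibr LF$ is $\eta_{LF}\colon LF\cofib LLF$, a trivial cofibration, hence a weak equivalence.

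For $L\eta_F$, Proposition~\ref{almost-natural} provides a commutative square placing $L\eta_F$ in the right column and $\eta_{LF}$ in the bottom row. Tracing the construction of $L$ on the morphism $\eta_F$, the map $L\eta_F$ arises through an induced map $P\widehat{O\tilde\eta_F}\colon P\widehat{O\tilde F}\to P\widehat{O\widetilde{LF}}$, which is $P$ applied to a weak equivalence of fibrant pro-spectra by the identification above, and is therefore itself a weak equivalence. Propagating this through the remaining factorizations and pushouts in the action of $L$ on morphisms shows that $L\eta_F$ is a fibrant-projective weak equivalence. The principal technical obstacle throughout is the weak equivalence $OF_1\simeq \widehat{O\tilde F}$: in the absence of the yet-unproved Quillen equivalence, one cannot simply assert that the adjoint of a weak equivalence between cofibrant-to-fibrant objects is a weak equivalence, so the explicit formula for $O$ on cellular functors must be invoked directly.
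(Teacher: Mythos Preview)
Your overall architecture matches the paper's: reduce both $\eta_{LF}$ and $L\eta_F$ to showing that the derived unit $u_{LF}$ (respectively $P\widehat{O\widetilde{\eta_F}}$) is a fibrant-projective weak equivalence, and for this establish that applying $O$ to the trivial fibration $F_1\trivfibr P\widehat{O\tilde F}$ yields a weak equivalence in $(\proSp)^{\op}$. You correctly identify this last step as the crux and correctly note that the Quillen equivalence cannot yet be invoked.

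The gap is precisely at that crux. Your justification, ``verified via the explicit formula $O(\colim_i C_i)=\{ZC_i\}$ on cellular presentations combined with Corollary~\ref{classification}'', is not a proof. The explicit formula tells you what pro-object $OF_1$ \emph{is} for one chosen presentation of $F_1$, and Corollary~\ref{classification} tells you that $F_1$ is fibrant-projectively equivalent to some $PY_\bullet$; neither statement, nor their combination, gives you that the \emph{specific} map $OF_1\to OP\widehat{O\tilde F}=\widehat{O\tilde F}$ is a strict weak equivalence of pro-spectra. Knowing $F_1\simeq PY_\bullet$ does not pin down $Y_\bullet$ up to strict weak equivalence until the Quillen equivalence is established, which is exactly what you say you cannot use. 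The paper isolates this step as a separate lemma (Lemma~\ref{derived-unit}) and proves it by a different route: rather than the arbitrary cellular replacement of $PX_\bullet$, it uses the tailored cofibrant approximation $\hocolim_i R^{\hat X_i}\to \colim_i R^{X_i}$, on which $O$ acts transparently, and then appeals to the general fact (Lemma~\ref{filtered-colimits}) that filtered colimits in class-cofibrantly generated model categories are homotopy colimits, applied both in $\Sp^\Sp$ and in $(\proSp)^{\op}$. That argument never touches Corollary~\ref{classification}.

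A secondary point: in your treatment of $\eta_{LF}$ you conclude ``$P\widehat{O\widetilde{LF}}\simeq P\widehat{O\tilde F}\simeq \widetilde{LF}$, therefore $u_{LF}$ is a weak equivalence''. An abstract zigzag between source and target does not by itself make the given map $u_{LF}$ a weak equivalence; you need the zigzag to be compatible with $u_{LF}$. The paper handles this by drawing the commutative diagram carefully and applying 2-out-of-3 along specific composable maps. Your argument can be repaired in the same way, but as written it skips the compatibility check.
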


We begin with a technical lemma about class-combinatorial model
categories generalizing similar results for combinatorial model
categories: weak equivalences are closed under $\lambda$-filtered
colimits \cite[7.3]{Dugger-generation}.

\begin{lemma}\label{filtered-colimits}
Let $\cat M$ be a class-cofibrantly generated model category with
$\lambda$-presentable domains and codomains of generating
(trivial) cofibrations. Then, $\lambda$-filtered colimits of
objects in $\cat M$ are homotopy colimits. In other words, every
levelwise weak equivalence of $\lambda$-filtered diagrams in $\cat
M$ induces a weak equivalences between their colimits.
\end{lemma}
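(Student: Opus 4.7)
The plan is to adapt the standard combinatorial-model-category argument, replacing the global $\lambda$-presentability of $\cat M$ with the weaker hypothesis that only the domains and codomains of the generating (trivial) cofibrations are $\lambda$-presentable. The backbone of the proof is a functorial factorization produced by the (class-) small object argument, followed by two separate colimit-preservation results: one for trivial fibrations and one for relative $J$-cell complexes.

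First I would prove the key preservation statement: a $\lambda$-filtered colimit of a levelwise trivial fibration $p_i\colon Z_i\twoheadrightarrow Y_i$ is again a trivial fibration. Given a generating cofibration $j\colon A\hookrightarrow B$ with $A,B$ both $\lambda$-presentable and a lifting square with right-hand map $\colim p_i$, the maps out of $A$ and $B$ factor through some common index of the $\lambda$-filtered diagram, and $\lambda$-presentability of $A$ lets one pass to a still later index where the resulting square commutes strictly. Lifting at that stage and post-composing with the colimit inclusion solves the original lifting problem.

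Next I would factor the given levelwise weak equivalence $f\colon X\to Y$ through the functorial factorization coming from the small object argument applied to the set $J$ of generating trivial cofibrations, obtaining $X\hookrightarrow Z\twoheadrightarrow Y$ in which each $X_i\to Z_i$ is a relative $J$-cell complex (hence a trivial cofibration) and each $Z_i\to Y_i$ is a fibration that is in fact trivial by 2-out-of-3. Since the small object argument is built from transfinite compositions of pushouts of maps with $\lambda$-presentable domains and codomains, it commutes with $\lambda$-filtered colimits; consequently $\colim X\to\colim Z$ is itself a relative $J$-cell complex, hence a trivial cofibration. Combined with the previous step showing $\colim Z\to\colim Y$ is a trivial fibration, the 2-out-of-3 property yields that $\colim f$ is a weak equivalence.

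The main obstacle is the claim that the functorial factorization commutes with $\lambda$-filtered colimits, because in a class-cofibrantly generated setting the small object argument is not performed over a set of maps but over a proper class; one has to be careful that the transfinite recursion used at each stage $i$ can be chosen compatibly along the indexing category, and that the cardinal bounding the recursion can be taken uniform. This is where the hypothesis on $\lambda$-presentable domains and codomains of generating cofibrations is essential, and this is precisely the point where one needs to invoke the structural results on class-cofibrantly generated model categories rather than simply quote Dugger's combinatorial result \cite[7.3]{Dugger-generation}.
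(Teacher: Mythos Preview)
Your argument is correct in outline and would work, but the paper takes a cleaner route that sidesteps exactly the obstacle you flag in your final paragraph. Rather than factoring $f$ levelwise via the small object argument and then arguing that this factorization commutes with $\lambda$-filtered colimits, the paper puts the \emph{projective} model structure on $\cat M^{\cat A}$ (a straightforward generalization of \cite[11.6]{Hirschhorn}), takes a projective cofibrant replacement $\tilde f\colon \tilde X\to \tilde Y$ of $f$, and then observes two things: (i) $\colim\colon \cat M^{\cat A}\to \cat M$ is left Quillen for the projective structure, hence preserves weak equivalences between cofibrant objects, and (ii) $\colim$ preserves levelwise trivial fibrations, by the same lifting argument you give. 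An application of 2-out-of-3 to the resulting square finishes the proof.

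The advantage of the paper's approach is that it never needs the delicate claim that the $J$-small-object-argument factorization commutes with $\lambda$-filtered colimits; the only place $\lambda$-presentability of the generating domains and codomains enters is in step (ii), which is exactly your first preservation statement. Your approach, by contrast, is closer to Dugger's original combinatorial argument and is more explicit, but you have correctly identified that making the commutation step precise in the class-cofibrantly generated setting requires extra care. Both arguments are valid; the paper's simply packages the work into the existence of the projective model structure and the left Quillen property of $\colim$.
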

\begin{proof}
Let $\cat A$ be a $\lambda$-filtered category, and $\dgrm X, \dgrm
Y\colon \cat A\to \cat M$ be two diagrams, and let $f\colon\dgrm
X\to \dgrm Y$ be a levelwise weak equivalence. Consider the
projective model structure on the category $\cat M^\cat A$. It may
be constructed by a straightforward generalization of
\cite[11.6]{Hirschhorn}. Now, we apply a cofibrant replacement in
the projective model structure to the map $f$:
\[
\xymatrix{
\tilde X \ar@{->>}[d]_{\dir{~}}\ar[r]^{\tilde f}& \tilde Y\ar@{->>}[d]^{\dir{~}}\\
X \ar[r]_f &  Y
}
\]

The functor $\colim \colon \cat M^\cat A \to \cat M$ is a left
Quillen functor if the domain category is equipped with the
projective model structure, and hence it preserves weak
equivalences of cofibrant objects. Moreover, this functor
preserves trivial fibrations, since the category $\cat M$ is
class-cofibrantly generated. Therefore, applying a colimit on the
commutative square above, we conclude, by the 2-out-of-the-3
property for weak equivalences, that $\colim f$ is a weak
equivalence.
\end{proof}

\begin{lemma}\label{derived-unit}
Let $X_\bullet\in \proSp$ be a cofibrant pro-spectrum. Then,
$PX_\bullet\in \Sp^\Sp$ is a filtered colimit of representable
functors and not necessarily cofibrant. Consider a cofibrant
replacement, $p\colon \widetilde{PX_\bullet} \trivfibr
PX_\bullet$. Then, the left adjoint $O$ preserves this weak
equivalence: $Op\colon O\widetilde{PX_\bullet} \we OPX_\bullet$.
\end{lemma}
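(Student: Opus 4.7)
My plan is to exhibit a specific cofibrant replacement of $PX_\bullet$ for which $O$ manifestly produces a pro-spectrum weakly equivalent to $X_\bullet$, and then to transport this conclusion to an arbitrary cofibrant replacement via Ken Brown's lemma for the left Quillen functor $O$.

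First I would reduce to the case where $X_\bullet = \{X_i\}_{i \in I}$ is essentially levelwise bifibrant in $\Sp$. Factoring $X_\bullet \to \ast$ as a trivial cofibration followed by a fibration yields a bifibrant approximation $X_\bullet \we X'_\bullet$ that remains cofibrant, and a standard argument using the right Quillen functor $P$ (Proposition~\ref{Quillen-map}) together with Ken Brown for $O$ reduces the statement for $X_\bullet$ to that for $X'_\bullet$. Under this reduction each representable $R^{X_i}$ is cofibrant in the fibrant-projective structure on $\Sp^\Sp$, since cofibrant representables correspond to fibrant spectra by \cite[Prop.~5.3]{Duality}, and $PX_\bullet = \colim_i R^{X_i}$ becomes a filtered colimit of cofibrant functors.

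Next I would form an explicit cofibrant replacement $\widetilde{PX_\bullet}^{\mathrm{tel}}$ of $PX_\bullet$ as the mapping telescope of the diagram $\{R^{X_i}\}$, which is cofibrant since each $R^{X_i}$ is. Choosing an accessibility cardinal $\lambda$ larger than the presentability of the generating cofibrations and of the indexing $I$, Lemma~\ref{filtered-colimits} implies the natural map $\widetilde{PX_\bullet}^{\mathrm{tel}} \to PX_\bullet$ is a weak equivalence. Any other cofibrant replacement $\widetilde{PX_\bullet}$ is related to $\widetilde{PX_\bullet}^{\mathrm{tel}}$ by a zigzag of weak equivalences between cofibrant objects (preserved by $O$ via Ken Brown), so it suffices to analyze $O\widetilde{PX_\bullet}^{\mathrm{tel}}$.

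Since $O$ preserves colimits and, by the computation in the proof of Proposition~\ref{adjunction}, carries each $R^{X_i}$ to the constant pro-spectrum on $X_i$, the image $O\widetilde{PX_\bullet}^{\mathrm{tel}}$ is a homotopy colimit in $(\proSp)^{\op}$ of the cofiltered family of constant pro-spectra $\{X_i\}_{i \in I}$, equivalently a homotopy limit in $\proSp$ over $I$. The literal limit in $\proSp$ of constant pro-spectra over a cofiltered $I$ is tautologically the pro-spectrum $\{X_i\}_I = X_\bullet$, and the essentially levelwise bifibrancy of $X_\bullet$ forces the strict homotopy limit to agree with this literal limit, giving $O\widetilde{PX_\bullet} \we X_\bullet = OPX_\bullet$ as required. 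The main obstacle is precisely this last identification: whereas in $\Sp$ the homotopy limit of a cofiltered bifibrant diagram coincides with the strict limit, transferring this to the strict model structure on $\proSp$ requires delicate verification of fibrancy and cofinality conditions in the pro-category, and it is here that the bifibrancy reduction pays off.
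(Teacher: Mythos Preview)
Your overall architecture matches the paper's: pick an explicit cofibrant model for $PX_\bullet$ as a homotopy colimit of representables, apply $O$, and compare the resulting homotopy colimit in $(\proSp)^{\op}$ with the strict colimit. The paper does exactly this, using $\overline{PX_\bullet}=\hocolim_i R^{\hat X_i}$ with $\hat X_i$ a functorial fibrant replacement of $X_i$; your telescope and your bifibrancy reduction are harmless variants of the same idea (the reduction is unnecessary, since fibrantly replacing the $X_i$'s already makes each $R^{\hat X_i}$ cofibrant).

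The genuine gap is your final step. You assert that ``essentially levelwise bifibrancy of $X_\bullet$ forces the strict homotopy limit to agree with the literal limit'' in $\proSp$, and you flag this as the main obstacle without resolving it. Bifibrancy of the entries does \emph{not} buy this: even in $\Sp$ itself, a cofiltered limit of bifibrant objects is not the homotopy limit unless the diagram is injectively fibrant (e.g.\ the bonding maps are fibrations), so your parenthetical analogy with $\Sp$ is already false, and there is no reason the situation improves in $\proSp$. The paper sidesteps this entirely by dualizing: since the strict model structure on $\proSp$ is class-\emph{fibrantly} generated \cite{pro-spaces}, the opposite category $(\proSp)^{\op}$ is class-\emph{cofibrantly} generated, and Lemma~\ref{filtered-colimits} applies a second time to give that the filtered colimit $\colim_i X_i$ in $(\proSp)^{\op}$ is already a homotopy colimit. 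That is the missing ingredient; once you have it, no bifibrancy reduction is needed and the proof closes immediately.
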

\begin{proof}
As the left Quillen functor $O$ preserves weak equivalences
between cofibrant objects,  it suffices to prove that $O$ takes
into a weak equivalence some cofibrant approximation of $P\{X_i\}
= \colim_i R^{X_i}$. Consider the cofibrant approximation:
$q\colon \hocolim_i R^{\hat X_i} = \overline{PX_\bullet}\to
PX_\bullet$, where $q$ is induced by the fibrant-projective
cofibrant approximations $R^{\hat X_i}\we R^{X_i}$, while the maps
$X_i \trivcofib \hat X_i$ are the functorial fibrant
approximations in $\Sp$. $\overline{PX_\bullet}$ is cofibrant as a
homotopy colimit of a diagram with cofibrant entries (we assume
here that a homotopy colimit is defined as a coend with a
projectively cofibrant, contractible diagram of spaces, i.e., a
left Quillen functor preserving cofibrant objects).  By
Lemma~\ref{filtered-colimits}, the map $q$ is a weak equivalence,
since filtered colimits in the class-cofibrantly generated
fibrant-projective model structure on $\Sp^\Sp$ are homotopy
colimits.

$O$ preserves colimits and homotopy colimits as a left Quillen
functor, and hence the map $Oq\colon O\overline{PX_\bullet}\to
OPX_\bullet$ is essentially the map $Oq\colon \hocolim OR^{\hat
X_i}\to \colim OR^{X_i}$, or just $Oq\colon \hocolim \hat X_i \to
\colim X_i$ in the opposite of the strict model structure on
$(\proSp)^{\op}$. However, the strict model structure on $\proSp$
is class-fibrantly generated \cite{pro-spaces}, and therefore the
dual model structure is class-cofibrantly generated and the map
$Oq$ is a weak equivalence by Lemma~\ref{filtered-colimits}.
Therefore, $Op$ is also a weak equivalence.
\end{proof}

\begin{lemma}\label{O-eq}
The map $O \widetilde{\eta_F}\colon O\widetilde{F} \we O\widetilde{LF}$ is a  weak equivalence for all $F\in \Sp^{\Sp}$.
\end{lemma}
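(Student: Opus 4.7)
The plan is to exploit the factorization $\tilde F \cofib F_1 \trivfibr P\widehat{O\tilde F}$ from diagram~(\ref{L-constr}) and deduce the weak equivalence by a two-step 2-out-of-3 argument: handle $O(F_1 \trivfibr P\widehat{O\tilde F})$ via Lemma~\ref{derived-unit}, and handle $Ou_F$ via the triangle identity for the adjunction $(O, P)$.

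First I would observe that $\tilde F \cofib F_1$ is a cofibrant approximation of the coaugmentation $\eta_F\colon F \cofib LF$. Indeed, by construction $LF$ is the pushout of $\tilde F \trivfibr F$ along the cofibration $\tilde F \cofib F_1$, and left properness of the fibrant-projective model structure on $\Sp^\Sp$ yields a weak equivalence $F_1 \we LF$. Both $\tilde F$ and $F_1$ are cofibrant, so the arrow $(\tilde F \cofib F_1)$ is a cofibrant approximation of $\eta_F$ in the arrow model category and is therefore interchangeable with $\widetilde{\eta_F}$ after applying $O$ (which preserves weak equivalences between cofibrant objects). It thus suffices to show that $O(\tilde F \cofib F_1)$ is a weak equivalence in $(\proSp)^{\op}$.

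Next I would apply Lemma~\ref{derived-unit} to the right half of the factorization. The pro-spectrum $\widehat{O\tilde F}$ is cofibrant in $\proSp$, since a fibrant replacement in $(\proSp)^{\op}$ is a cofibrant replacement in $\proSp$. The map $F_1 \trivfibr P\widehat{O\tilde F}$ is then a cofibrant approximation of $P\widehat{O\tilde F}$ because $F_1$ is cofibrant, and Lemma~\ref{derived-unit} delivers the weak equivalence $O(F_1 \trivfibr P\widehat{O\tilde F})$ in $(\proSp)^{\op}$.

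Finally, $u_F = P(r) \circ \eta_{\tilde F}$ by construction, where $r\colon O\tilde F \trivfibr \widehat{O\tilde F}$ is the fibrant replacement in $(\proSp)^{\op}$ and $\eta$ is the unit of $(O, P)$. The triangle identity for the adjunction then yields $r = \epsilon_{\widehat{O\tilde F}} \circ Ou_F$, with $\epsilon$ the counit. The counit is an isomorphism at every pro-spectrum, since by the computation of $O$ on pro-representables in the proof of Proposition~\ref{adjunction} (using $Z R^{X_i} = X_i$) the functor $P$ is fully faithful. Because $r$ is a weak equivalence, 2-out-of-3 forces $Ou_F$ to be a weak equivalence; applying 2-out-of-3 once more to the composite $Ou_F = O(F_1 \trivfibr P\widehat{O\tilde F}) \circ O(\tilde F \cofib F_1)$ delivers the desired conclusion. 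I expect the subtlest step to be the second one: preservation of the trivial fibration $F_1 \trivfibr P\widehat{O\tilde F}$ under the left adjoint $O$ is not automatic and relies essentially on Lemma~\ref{derived-unit}, which in turn uses the class-fibrantly generated structure on $\proSp$.
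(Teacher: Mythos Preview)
Your argument is correct and follows essentially the same route as the paper: use $F_1$ as a cofibrant model for $LF$, apply Lemma~\ref{derived-unit} to the trivial fibration $F_1\trivfibr P\widehat{O\tilde F}$, observe that $Ou_F$ is a weak equivalence because the counit of $(O,P)$ is an isomorphism (so $OP\widehat{O\tilde F}\cong\widehat{O\tilde F}$ and $Ou_F$ identifies with the fibrant replacement map $r$), and conclude by 2-out-of-3. Your write-up is in fact more explicit than the paper's on two points: you spell out why $F_1\we LF$ via left properness, and you unpack the triangle-identity reasoning behind $Ou_F\simeq r$, which the paper compresses into the single isomorphism $OP\widehat{O\tilde F}\cong\widehat{O\tilde F}$.
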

\begin{proof}
In the commutative diagram  (\ref{L-constr}), the object  $F_{1}$
may serve as a cofibrant replacement for both $LF$ and
$P\widehat{O\tilde F}$. Therefore, applying the functor $O$ on the
commutative diagram (\ref{L-constr}), we conclude that the map
$O\widetilde{\eta_F}\colon O\widetilde{F} \we O\widetilde{LF}$ is
a weak equivalence: the trivial fibration $F_{1}\trivfibr
P\widehat{O\tilde F}$ remains  a weak equivalence after
application of $O$ by Lemma~\ref{derived-unit}, and the derived
unit of the  $(O,P)$-adjunction $u_{F}$ is also turned by $O$ into
a weak equivalence $O\widetilde F \to OP\widehat{O\widetilde F}
\cong \widehat{O\widetilde F}$, and hence the map of $\widetilde
F\cofib F_1$ is turned by the application of $O$ into a weak
equivalence; the map $O\widetilde{\eta_{F}}$ is then a weak
equivalence by the `2-out-of-the-3' property.
\end{proof}

\begin{proof}[Proof of Prop.~\ref{homotopy-idemp}]
Consider the construction of $\eta_{LF}$ first.
\[
\xymatrix{
\tilde F
\ar@{->>}[dd]^{\dir{~}}
\ar@{^(->}[dr]_{a}
\ar[rr]^{u_F}       & & P\widehat{O(\tilde F)}
                \ar[r]^{\dir{~}}_{m}  & P\widehat{O}(\widetilde{P\widehat{O\tilde F}})
                                \ar[r]^{\dir{~}}_{g}
                                        & P\widehat{OP\widehat{O\tilde F}}
                                        \ar@{=}[d]  \\
        & F_1
           \ar@{->>}[ur]^{\dir{~}}
           \ar[d]^{\dir{~}}
           \ar[rr]^{\dir{~}}_k
           \ar[rd]^{\dir{~}}_{b}    &  & P\widehat{O F_1}
                                \ar[d]^{\dir{~}}
                            \ar[u]_{\dir{~}}   &   P\widehat{\widehat{O\tilde F}}\\
F
\ar[r]^{\eta_F}     &   LF
                \ar@{^(->}[dr]_{\eta_{LF}}^{\dir{~}} & \widetilde{LF}
                                            \ar@{->>}[l]_{\dir{~}}
                                            \ar[r]^{\dir{~}}_{u_{LF}}
                                            \ar@{^(->}[dr]^{\dir{~}}_{l} & P\widehat{O\widetilde{LF}}\\
  &  & LLF & F_2.
            \ar@{->>}[u]^{\dir{~}}
            \ar[l]_{\dir{~}}
}
\]
In the commutative diagram above, the map $g$ is a weak
equivalence by Lemma~\ref{derived-unit}, and therefore the map $m$
is also a weak equivalence. Applying consecutively  the
`2-out-of-3' property, we find that the maps $k$, $u_{LF}$, and
$l$ are weak equivalences. Therefore, $\eta_{LF}$ is a weak
equivalence by the `2-out-of-3' property again.

Now, consider the construction of $L\eta_{F}$.
\[
\xymatrix{
P\widehat{O\tilde F}
\ar[rrr]^{P\widehat{O\widetilde{\eta_F}}}        &           &          & P\widehat{O\widetilde{LF}}\\
                                & F_{1}
                                  \ar@{->>}[ul]_{\dir{~}}
                                  \ar[r]^h
                                  \ar'[d][dd]       & F_2 \ar@{->>}[ur]^{\dir{~}}
                                                    \ar'[d][dd]  \\
\tilde F
\ar@{->>}[dd]_{\dir{~}}
\ar[uu]^{u_{F}}
\ar[rrr]_{\widetilde{\eta_F} }
\ar@{^(->}[ur]&       &      & \widetilde{LF} \ar@{->>}[dd]_{\dir{~}}
                            \ar[uu]_{u_{LF}}
                            \ar@{_(->}[ul]\\
            & LF \ar[r]_{L\eta_F} & LLF\\
F \ar[rrr]_{\eta_F}
   \ar@{^(->}[ur]^{\eta_F}  & & & LF \ar@{_(->}[ul]_{\eta_{LF}}
}
\]
The map $P\widehat{O\widetilde{\eta_F}}$ is a weak equivalence as
an application of the right Quillen functor $P$ on the weak
equivalence, by Lemma~\ref{O-eq}, between fibrant objects
$\widehat{O\widetilde{\eta_F}}$.

The `2-out-of-the-3' property then implies that $h$ and $L\eta_F$ are weak equivalences as well.
\end{proof}

In addition, we notice that every stage in the construction of $L$
preserves weak equivalences, and therefore we readily obtain the
following
\begin{proposition}\label{homotopy-inv}
The localization construction $L$ preserves weak equivalences.
\end{proposition}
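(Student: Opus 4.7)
The plan is to trace the weak equivalence $g\colon F\we G$ through every stage of the construction of $Lg$ exhibited in the proof of Proposition~\ref{almost-natural}, and to conclude by an application of the gluing lemma to the pushout square defining $LF$ and $LG$.

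First, using the lifting axiom I would build a map $\tilde g\colon\tilde F\to\tilde G$ of the chosen cofibrant replacements. Since the trivial fibrations $\tilde F\trivfibr F$ and $\tilde G\trivfibr G$ are weak equivalences and $g$ is too by hypothesis, the 2-out-of-3 property forces $\tilde g$ to be a weak equivalence. As $O$ is left Quillen, Ken Brown's lemma implies that $O\tilde g\colon O\tilde F\to O\tilde G$ is a weak equivalence between cofibrant pro-spectra; a functorial fibrant replacement in $\proSp$ then keeps $\widehat{O\tilde g}$ a weak equivalence between fibrant objects, and the dual form of Ken Brown's lemma applied to the right Quillen functor $P$ yields a weak equivalence $P\widehat{O\tilde g}\colon P\widehat{O\tilde F}\we P\widehat{O\tilde G}$.

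Next I would examine the two factorizations $\tilde F\cofib F_1\trivfibr P\widehat{O\tilde F}$ and $\tilde G\cofib G_1\trivfibr P\widehat{O\tilde G}$ used in the construction of $LF$ and $LG$, together with the map $h\colon F_1\to G_1$ obtained by lifting against $\tilde g$ and $P\widehat{O\tilde g}$. The two trivial fibrations of the factorizations are weak equivalences, and the preceding paragraph shows that $\tilde g$ and $P\widehat{O\tilde g}$ are weak equivalences, so a further appeal to 2-out-of-3 makes $h$ a weak equivalence as well.

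Finally, $LF$ is the pushout $F\sqcup_{\tilde F}F_1$ and $LG$ is $G\sqcup_{\tilde G}G_1$; both squares compute homotopy pushouts thanks to left properness of the fibrant-projective model structure on $\Sp^\Sp$, since $\tilde F\cofib F_1$ and $\tilde G\cofib G_1$ are cofibrations. The gluing lemma applied to the termwise weak equivalence of spans $(F_1\hookleftarrow\tilde F\trivfibr F)\we(G_1\hookleftarrow\tilde G\trivfibr G)$ assembled from $h$, $\tilde g$, and $g$ then yields the desired weak equivalence $Lg\colon LF\we LG$. I do not anticipate a serious obstacle, since each stage in the definition of $L$ was arranged to be homotopy meaningful; the argument is essentially a bookkeeping of 2-out-of-3, Ken Brown's lemma, and left properness.
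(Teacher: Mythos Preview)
Your argument is correct and is precisely the detailed verification that the paper omits: the paper's own ``proof'' is the single sentence preceding the proposition, namely that each stage of the construction of $L$ preserves weak equivalences, and you have unpacked exactly that claim (cofibrant replacement via 2-out-of-3, $O$ via Ken Brown, fibrant replacement, $P$ via Ken Brown, the factorization via 2-out-of-3, and the final pushout via left properness and the gluing lemma). This is the same approach, just made explicit.
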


We are  now ready to compare the two non-functorial localization
constructions and prove that $QF$ is weakly equivalent to $LF$ for
all $F\in \Sp^\Sp$. We already know that the classes of $Q$-local
objects and $L$-local objects coincide: these are fibrant functors
weakly equivalent to filtered colimits of representable functors
with cofibrant representing objects. Ideologically, this should
imply the equivalence of localization constructions immediately.
However, the proof of a general statement of this kind is involved
and requires plenty of additional structure on the localization
constructions, which does not exist in our case (cf.,
\cite{CaCho},\cite{Farjoun-natural}). Therefore, we shall carry
out the proof in this particular situation.

\begin{lemma}\label{derived-unit1}
The derived unit map $u_F\colon \tilde{F}\to P\widehat{O\tilde F}$ is a $Q$-equivalence for all $F\in \Sp^{\Sp}$.
\end{lemma}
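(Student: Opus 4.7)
The plan is to verify the defining property of a $Q$-equivalence: that $u_F$ induces a weak equivalence on derived mapping spaces into every $Q$-local object. The computation will be carried out via the Quillen adjunction $(O,P)$ after reducing to $Q$-local targets of the form $PW_\bullet$.

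First I would observe that the target $P\widehat{O\tilde F}$ is itself $Q$-local: since $\widehat{O\tilde F}$ is fibrant in $(\proSp)^{\op}$, it is essentially levelwise cofibrant in $\proSp$, so $P\widehat{O\tilde F}$ is a filtered colimit of representable functors in cofibrant spectra, i.e., a fibrant homotopy functor. By Corollary~\ref{classification}, every $Q$-local $H$ is fibrant-projectively equivalent to some $PW_\bullet$ with $W_\bullet$ fibrant in $(\proSp)^{\op}$, so it suffices to show that
\[
u_F^\ast\colon \Map\bigl(P\widehat{O\tilde F},\, PW_\bullet\bigr)\longrightarrow \Map\bigl(\tilde F,\, PW_\bullet\bigr)
\]
is a weak equivalence of derived simplicial mapping spaces for every such $W_\bullet$.

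Using the Quillen adjunction $(O,P)$ together with $OP\cong\id$ (observed in the proof of Proposition~\ref{adjunction}) and Lemma~\ref{derived-unit} applied to a cofibrant replacement $\widetilde{P\widehat{O\tilde F}}\trivfibr P\widehat{O\tilde F}$, both sides are naturally identified with derived mapping spaces in $(\proSp)^{\op}$:
\[
\Map(\tilde F, PW_\bullet)\simeq \Map_{(\proSp)^{\op}}(O\tilde F, W_\bullet),\qquad \Map(P\widehat{O\tilde F}, PW_\bullet) \simeq \Map_{(\proSp)^{\op}}(\widehat{O\tilde F}, W_\bullet).
\]
Under these identifications and the triangle identity for the unit, $u_F^\ast$ becomes restriction along $O\tilde F \to \widehat{O\tilde F}$, which is the fibrant replacement in $(\proSp)^{\op}$. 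Since $O\tilde F$ is cofibrant (as $\tilde F$ is cofibrant and $O$ is left Quillen) and the map is a trivial cofibration, it is a weak equivalence between cofibrant objects, and hence induces a weak equivalence of derived mapping spaces into the fibrant target $W_\bullet$.

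The main obstacle is the bookkeeping required to pass from the naive adjunction isomorphism on $\hom$-sets to a weak equivalence on derived mapping spaces. The source $P\widehat{O\tilde F}$ is not necessarily cofibrant, so Lemma~\ref{derived-unit} is essential for certifying that a cofibrant replacement has $O$-image weakly equivalent to $\widehat{O\tilde F}$; after this identification, the conclusion follows by the standard Quillen-adjunction yoga.
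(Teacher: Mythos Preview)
Your proposal is correct and follows essentially the same route as the paper: reduce via Corollary~\ref{classification} to $Q$-local targets of the form $PW_\bullet$, pass through the $(O,P)$-adjunction, and invoke Lemma~\ref{derived-unit} on a cofibrant replacement of $P\widehat{O\tilde F}$ to identify the induced map with restriction along the fibrant replacement $O\tilde F\trivcofib\widehat{O\tilde F}$. The only cosmetic difference is that the paper explicitly cites Proposition~\ref{H-eq} to translate ``$Q$-equivalence'' into the $\cal H$-equivalence mapping-space criterion, whereas you treat that criterion as the definition.
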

\begin{proof}
By Proposition~\ref{H-eq}, it suffices to check whether $u_F$ is
an $\cal H$-equivalence, i.e., it suffices to verify that
$\hom(\widetilde{u_F}, W)$ for any $Q$-local functor $W$. By
Corollary~\ref{classification}, $W$ is weakly equivalent to a
filtered colimit of representable functors represented in
cofibrant spectra, and hence $W\simeq PX_\bullet$ for some
cofibrant pro-spectrum $X_\bullet$.

By adjunction, the map $\hom(\widetilde{u_F}, PX_\bullet)$ is
naturally isomorphic to the map $\hom(O\widetilde{P\widehat{O
\tilde F}}, X_\bullet)\to \hom({O\tilde F, X_\bullet})$.

By Lemma~\ref{derived-unit}, $O\widetilde{P\widehat{O \tilde
F}}\simeq OP\widehat{O \tilde F}= \widehat{O \tilde F}$, showing
that the last map is a weak equivalence.
\end{proof}

\begin{proposition}
For all $F\in \Sp^\Sp$ there is a weak equivalence $QF\simeq LF$.
\end{proposition}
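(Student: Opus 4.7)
The plan is to show that $LF$ serves as a homotopy $Q$-localization of $F$, so that $QF\simeq LF$ follows by an object-level uniqueness of $Q$-localizations. The argument splits into three parts. First I would verify that $LF$ is weakly equivalent to a $Q$-local object. The defining diagram~(\ref{L-constr}) exhibits $LF$ as the pushout of the cofibration $\tilde F \cofib F_1$ along the trivial fibration $\tilde F \trivfibr F$; left properness of the fibrant-projective model structure on $\Sp^\Sp$ then implies that the induced map $F_1 \to LF$ is a weak equivalence, so $LF \simeq F_1 \simeq P\widehat{O\tilde F}$. Since $\widehat{O\tilde F}$ is a cofibrant pro-spectrum (being a fibrant replacement in $(\proSp)^{\op}$), the functor $P\widehat{O\tilde F}$ is a filtered colimit of representable functors represented in cofibrant spectra, and by Corollary~\ref{classification} it is, after a fibrant-projective fibrant replacement, a fibrant homotopy functor, hence $Q$-local.

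Next I would show that the coaugmentation $\eta_F \colon F \to LF$ is a $Q$-equivalence. Lemma~\ref{derived-unit1} gives that $u_F \colon \tilde F \to P\widehat{O\tilde F}$ is a $Q$-equivalence. The factorization $u_F = (F_1 \trivfibr P\widehat{O\tilde F})\circ(\tilde F \cofib F_1)$ together with the 2-out-of-3 property for $Q$-equivalences (valid because every weak equivalence is a $Q$-equivalence) forces $\tilde F \cofib F_1$ to be a $Q$-equivalence; composition with the weak equivalence $F_1 \to LF$ from the first step yields a $Q$-equivalence $\tilde F \to LF$. The commutativity of the pushout square reads $\eta_F \circ (\tilde F \trivfibr F) = (\tilde F \to F_1 \to LF)$, and since $\tilde F \trivfibr F$ is a weak equivalence, one further application of 2-out-of-3 forces $\eta_F$ itself to be a $Q$-equivalence.

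The final and most delicate step is the uniqueness argument, which must be carried out object-by-object because both $Q$ and $L$ are non-functorial. I would choose a fibrant-projective fibrant replacement $LF \trivcofib \widehat{LF}$; being weakly equivalent to a homotopy functor it is itself a fibrant homotopy functor, and is therefore fibrant in the $Q$-model structure of Section~\ref{homotopy-model}. The composition $F \to LF \to \widehat{LF}$ is then a $Q$-equivalence into a $Q$-fibrant object, while $F \cofib QF$ is a $Q$-acyclic cofibration by the very construction of $Q$; the lifting axiom in the $Q$-model structure therefore produces a map $\phi \colon QF \to \widehat{LF}$ commuting with the two coaugmentations from $F$. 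A final use of 2-out-of-3 shows that $\phi$ is a $Q$-equivalence between $Q$-fibrant objects, so the $\cal H$-local Whitehead theorem (cf.~\cite[3.2.13]{Hirschhorn}) makes $\phi$ a fibrant-projective weak equivalence; composed with $LF \trivcofib \widehat{LF}$, this yields the required zig-zag $QF \simeq LF$. The main obstacle is exactly this last step, where non-functoriality blocks any appeal to a general uniqueness statement for left Bousfield localizations and forces the lifting-plus-Whitehead argument to be performed pointwise.
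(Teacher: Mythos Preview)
Your proof is correct and follows the same two-step strategy as the paper: first establish that $\eta_F\colon F\to LF$ is a $Q$-equivalence, then produce a lift into $\widehat{LF}$ to obtain the zig-zag $QF\simeq LF$.

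The implementations differ in two places worth noting. For the first step, the paper chooses a cardinal $\lambda$ dominating the accessibility ranks of all entries of diagram~(\ref{L-constr}) and passes to a locally functorial variant $Q'$ of $Q$, so as to argue that $\eta_F$ is a $Q$-equivalence \emph{iff} $u_F$ is. Your argument bypasses this entirely: you run 2-out-of-3 for $\cal H$-equivalences directly around the pushout square, using left properness to identify $F_1\to LF$ as a weak equivalence and Lemma~\ref{derived-unit1} for $u_F$. This is cleaner and avoids the accessibility bookkeeping. For the second step, the paper applies $Q$ to the map $\eta_F$, uses (implicitly) that $LF$ is weakly equivalent to the $Q$-local object $P\widehat{O\tilde F}$ to see that $LF\cofib QLF$ is a fibrant-projective trivial cofibration, and then lifts $QLF\to\widehat{LF}$. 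You instead lift $QF\to\widehat{LF}$ directly in the $Q$-model structure, having first verified that $\widehat{LF}$ is $Q$-fibrant. Both routes work; yours makes explicit the step ``$LF$ is weakly equivalent to a $Q$-local object'' that the paper's diagram takes for granted.

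One small remark: the appeal to Corollary~\ref{classification} in your first paragraph is unnecessary. That $P\widehat{O\tilde F}$ is a fibrant homotopy functor follows immediately from its description as a filtered colimit of functors $R^{X_i}$ with $X_i$ cofibrant; no fibrant replacement is needed, and the corollary runs in the opposite direction.
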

\begin{proof}
First, we notice that the coaugmentation map $\eta_F\colon F\to
LF$ is a $Q$-equivalence.

Given $F$, similarly to the verification of \cite[A5]{Duality} in
\ref{verification}, we choose a cardinal $\lambda$ big enough that
all entries of the commutative diagram (\ref{L-constr}) are
$\lambda$-accessible. Next, we apply a modification of $Q$, which
is functorial and on this particular diagram provides results
weakly equivalent to the application of $Q$, and conclude that the
map $\eta_F\colon F\cofib LF$ is a $Q$-equivalence if and only if
the derived unit map $u_F\colon \tilde{F}\to P\widehat{O\tilde F}$
is a $Q$-equivalence. Hence, by Lemma~\ref{derived-unit1},
$\eta_F\colon F\to LF$ is a $Q$-equivalence.

Consider now the following commutative diagram obtained by
application of the construction $Q$ on the coaugmentation
\[
\xymatrix{
F
\ar[r]^{\eta_F}
\ar@{^(->}[d]
           & LF
              \ar@{^(->}[r]^{\dir{~}}
              \ar@{^(->}[d]^{\dir{~}}
                               & \widehat{LF}
                                   \ar@{->>}[d]\\
QF
\ar[r]_{\dir{~}}
           & QLF
              \ar[r]
              \ar@{-->}[ur]
                             & \ast
}
\]
Since $\eta_{F}$ is a $Q$-equivalence, $Q\eta_{F}$ is a weak
equivalence in the fibrant projective model structure, and hence
we obtain a zig-zag weak equivalence $QF\simeq LF$ for all $F\in
\Sp^{\Sp}$.
\end{proof}

\section{Proof of Theorem~\ref{main-theorem}}\label{section-Quillen-equivalence}
In Proposition~\ref{Quillen-map}, we have shown that the
adjunction $\xymatrix{O\colon \Sp^\Sp
                \ar@/_/[r]  &
                                (\proSp)^{\op} \!: P
                                \ar@/_/[l]}$
is a Quillen pair. We need to show that for every cofibrant $F\in
\Sp^\Sp$ and every fibrant $X_\bullet \in (\proSp)^{\op}$ the map
$f\colon O(F)\to X_\bullet$ is a (strict) weak equivalence of
pro-spectra if and only if the map $g\colon F\to PX_\bullet$ is a
weak equivalence in the homotopy  model structure on $\Sp^\Sp$,
i.e., it is a $Q$-equivalence of small functors.

Suppose that $f\colon O(F)\to X_\bullet$ is a weak equivalence.
Applying a fibrant replacement on $O(F)$, we obtain a trivial
cofibration $j\colon OF \trivcofib \widehat{OF}$ and a
factorization of $f$ as $f=\hat f j$, where the lifting $\hat{f}$
exists since $X_\bullet$ is fibrant (in $\proSp^\op$). Moreover,
$\hat{f}$ is a weak equivalence of fibrant objects by the
`2-out-of-3' property. The adjoint map $g$ factors as a unit of
the adjunction $u\colon F\to POF$ composed with $Pf$: $g=P(f)u$,
but $Pf = P(\hat{f}j)=P(\hat{f})Pj$, and hence $g=P(\hat{f}) (P(j)
u)$. Now, $P(\hat{f})$ is a weak equivalence, since $P$ is a right
Quillen functor and preserves weak equivalences of fibrant
objects. The composed map $P(j) u$ is an $L$-equivalence by
Proposition~\ref{homotopy-idemp} and it is a $Q$-equivalence by
Lemma~\ref{derived-unit1}, which applies since $F$ is cofibrant.

Conversely, suppose that $g\colon F\to PX_{\bullet}$ is a weak
equivalence. Consider a cofibrant replacement $p\colon
\widetilde{PX_{\bullet}}\trivfibr PX_{\bullet}$. Then, there
exists a lift $\tilde g\colon F\to \widetilde{PX_{\bullet}}$ in
the homotopy model structure. Note that $\tilde g$ is a weak
equivalence of cofibrant objects by the `2-out-of-3' property,
since $g=p\tilde g$. The adjoint map $f\colon OF \to X_{\bullet}$
factors as $Og$ followed by the counit $c\colon OPX_{\bullet}\to
X_{\bullet}$, which is a natural isomorphism for all
$X_{\bullet}$. However, $Og=OpO\tilde g$, where $Op$ is a weak
equivalence by Lemma~\ref{derived-unit} and $O\tilde g$ is a weak
equivalence, since $O$ is a left Quillen functor. Hence, $f$ is a
weak equivalence.

\bibliographystyle{abbrv}

\normalsize
\baselineskip=17pt


\bibliography{Xbib}

\end{document}